\documentclass[12pt,leqno]{amsart}
\usepackage{amsmath}
\usepackage{amssymb}

\def\underset#1#2{{\mathrel{\mathop {{}_{} {#2}}\limits_{{#1}_{}}}}}
\def\upplim_#1{\underset{#1}{\overline\lim}\;}
\def\lowlim_#1{\underset{#1}{\underline\lim}\;}
\newtheorem{claim}[equation]{\indent{\it Claim}\rm }

\newtheorem{lemma}[equation]{Lemma}
\newtheorem{proposition}[equation]{Proposition}

\newtheorem{theorem}[equation]{Theorem}

\newcommand{\C}{{\mathbb{C}}}

\newcommand{\N}{\mathbb{N}}

\renewcommand{\P}{{\mathbb{P}}}

\newcommand{\Z}{\mathbb{Z}}

\numberwithin{equation}{section}

\title[Meromorphic mappings into projective varieties]{Meromorphic mappings into projective varieties intersecting arbitrary families of moving hypersurfaces} 

\author{Si Duc Quang$^{1,2}$ and Nguyen Linh Chi$^{1}$}

\begin{document}

\begin{abstract}
In this paper, we establish a general second main theorem for meromorphic mappings from $\C^m$ into a subvariety $V$ of $\P^n(\C)$ with respect to an arbitrary family of slowly moving hypersurfaces $\mathcal Q=\{Q_1,\ldots,Q_q\}$. In contrast to the usual setting, the mapping is not required to be algebraically nondegenerate over the field $\mathcal K_{\mathcal Q}$. Moreover, the truncation levels of the counting functions are explicitly estimated, and the total defect bound is given by $\Delta_{\mathcal Q,V}(3\dim V-1)$, which is independent of the mapping $f$, where $\Delta_{\mathcal Q,V}$ denotes the distributive constant of $\mathcal Q$ with respect to $V$.
\end{abstract}

\def\thefootnote{\empty}
\footnotetext{
2010 Mathematics Subject Classification:
Primary 32H30, 32A22; Secondary 30D35.\\
\hskip8pt Key words and phrases: Nevanlinna theory; second main theorem; meromorphic mapping; hypersurface; homogeneous polynomial; subgeneral position.}

\maketitle

\section{Introduction}

In 1926, R. Nevanlinna \cite{N} initially established a second main theorem for meromorphic functions and fixed values in the complex plane $\C$. Later, in 1930, H. Cartan \cite{Ca} extended Nevanlinna's result to the case of linearly nondegenerate meromorphic mappings from $\C^m$ into $\P^N(\C)$ with hyperplanes in general position. By introducing the notion of Nochka weights, E. Nochka \cite{Noc83} established a second main theorem for such mappings with families of hyperplanes in subgeneral position.

In recent years, the second main theorem for hypersurfaces (fixed or moving) has been intensively studied by many authors. First, in 1992, A. E. Eremenko and M. L. Sodin \cite{ES} proved a second main theorem for arbitrary holomorphic curves from $\C$ into $\P^N(\C)$ intersecting $q$ hypersurfaces $\{Q_i\}_{i=1}^q$ in general position. In 2004, M. Ru \cite{Ru04} considered the case of algebraically nondegenerate meromorphic mappings from $\C^m$ into $\P^N(\C)$ with families of hypersurfaces in general position by using the filtration of the vector space of homogeneous polynomials introduced by P. Corvaja and U. Zannier \cite{CZ}. Then, in \cite{Ru09}, he generalized his result to the case of algebraically nondegenerate meromorphic mappings into a projective subvariety of $\P^N(\C)$ by using the method in Diophantine approximation proposed by J. Evertse and R. Ferretti \cite{EF1}. His results were further generalized by S. D. Quang to the case of families of hypersurfaces in subgeneral position in \cite{Q19}, where he proposed the replacing hypersurfaces method in order to avoid using Nochka weights.

On the other hand, the results of M. Ru were also generalized to the case of moving hypersurfaces by G. Dethloff and T. V. Tan \cite{DT1,DT2}. In order to establish the second main theorem for moving hypersurfaces in general position, G. Dethloff and T. V. Tan constructed a new filtration instead of the filtration of Corvaja and Zannier. Also, in 2018, S. D. Quang \cite{Q18} established the second main theorem for moving hypersurfaces in subgeneral position. Recently, Q. Yan and G. Yu \cite{YY} used the methods of \cite{DT2,Q18,Q19} to obtain a second main theorem for nonconstant meromorphic mappings, but their theorem does not have the usual normal form of Cartan's theorem. More recently, S. D. Quang \cite{Q22c} generalized all the above-mentioned results by considering arbitrary families of moving hypersurfaces. In order to state this result, we need the following preparation.

We recall the following notions. Denote by $\mathcal M$ the field of all meromorphic functions on $\C^m$. A moving hypersurface of degree $d$ in $\P^n(\C)$ is a homogeneous polynomial $Q\in\mathcal M[x_0,\ldots,x_n]$ of the form
$$
Q(z)({\bf x})=\sum_{I\in\mathcal T_{d}}a_{I}(z){\bf x}^I,
$$
where $\mathcal T_d$ is the set of all $(n+1)$-tuples $(i_0,\ldots,i_n)$ satisfying $i_0+\cdots+i_n=d$ and $i_j\ge 0$ for all $j$, $a_{I}\in\mathcal M\ (I\in\mathcal T_{d})$ are not all identically zero, ${\bf x}=(x_0,\ldots,x_n)$, and ${\bf x}^I=x_0^{i_0}\cdots x_n^{i_n}$ for all $I\in\mathcal T_d$.
For each $z$ which is neither a pole of any $a_{I}$ nor a common zero of all $a_{I}\ (I\in\mathcal T_{d})$, we denote by $Q(z)^*$ the support of $Q(z)$, namely,
$$
Q(z)^*=\{(x_0:\cdots:x_n)\in\P^n(\C)\mid \sum_{I\in\mathcal T_{d}}a_{I}(z){\bf x}^I=0\}.
$$
The moving hypersurface $Q$ is said to be slow with respect to a meromorphic mapping $f$ from $\C^m$ into $\P^n(\C)$ if
$$
\| \ T(r,a_I)=o(T_f(r))\quad \forall I\in\mathcal T_d,
$$
where $T(r,a_I)$ and $T_f(r)$ are the characteristic functions of $a_I$ and $f$ respectively (see Section 2 for the definitions).
Here, the notation ``$\| P$'' means that the assertion $P$ holds for all $r\in(0,+\infty)$ outside a set of finite Borel measure.

Let $\mathcal Q=\{Q_i\}_{i=1}^q$ be a family of moving hypersurfaces in $\P^n(\C)$ given by
$$
Q_i(z)({\bf x})=\sum_{I\in\mathcal T_{d_i}}a_{iI}(z){\bf x}^I,
$$
where $d_i=\deg Q_i\ (1\le i\le q)$. Denote by ${\mathcal K}_{\mathcal Q}$ the smallest subfield of $\mathcal M$ containing $\C$ and all ratios $\frac{a_{iI}}{a_{iJ}}$ with $a_{iJ}\not\equiv 0$.

Let $\tilde f=(f_0,\ldots,f_n)$ be a reduced representation of $f$. Denote by $I(f(\C^m))$ the ideal of all polynomials $Q$ in ${\mathcal K}_{\mathcal Q}[x_0,\ldots,x_n]$ (including the zero polynomial) such that $Q(\tilde f)\equiv 0$. Choose a minimal generating set of $I(f(\C^m))$, denoted by $\{R_1,\ldots,R_p\}$. The algebraic dimension of $f$ over $\mathcal K_{\mathcal Q}$ is defined by
$$
n_f:=\dim V_z,\ \text{where }\ V_z=\bigcap_{i=1}^pR_i(z)^*,
$$
for generic points $z\in\C^m$. Here, whenever we say that an assertion holds for generic points, we mean that it holds outside a proper analytic subset of $\C^m$.

Let $H_{V_z}(u)$ be the Hilbert function of $V_z$ (see Section 2 for the definition). For a fixed positive integer $u$, $H_{V_z}(u)$ is constant for generic points $z$. By the usual theory of Hilbert polynomials,
\begin{align*}
H_{V_z}(u)=\deg V_z\cdot\dfrac{u^{\dim V_z}}{(\dim V_z)!}+O(u^{\dim V_z-1}).
\end{align*}
This implies that $\deg V_z$ is also constant for generic points $z$. We define the algebraic degree of $f$ over $\mathcal K_{\mathcal Q}$ by
$$
\delta_f:=\deg V_z,
$$
for generic points $z$.

Let $V$ be a projective subvariety of $\P^n(\C)$. For an analytic subset $S$ of $\P^n(\C)$, we define
$$
\mathrm{codim}_VS:=
\begin{cases}
\dim V-\dim (V\cap S),&\text{if }V\cap S\ne\varnothing,\\
\dim V+1,&\text{if }V\cap S=\varnothing.
\end{cases}
$$

We now recall the following definition from \cite{Q22c}.

\vskip0.2cm
\noindent
{\bf Definition A.} (cf. \cite[Definition 3.3]{Q22c})
{\it With the above notation, we define the distributive constant of the family $\mathcal Q=\{Q_1,\ldots,Q_q\}$ with respect to $f$ by
$$\Delta_{\mathcal Q,f}:=\underset{\varnothing\ne\Gamma\subset\{1,\ldots,q\}}{\max}\dfrac{\sharp\Gamma}{\mathrm{codim}_{V_z}\left(\bigcap_{j\in\Gamma}Q_j(z)^*\right)}$$
for generic points $z\in\C^m$.}

In \cite{Q22c}, the first author proved the following theorem.

\vskip0.2cm
\noindent
{\bf Theorem B.} (see \cite[Theorem 1.1]{Q22c})
{\it Let $f$ be a meromorphic mapping of $\C^m$ into $\P^n(\C)$. Let $\mathcal Q=\{Q_i\}_{i=1}^q$ be a family of slowly moving hypersurfaces in $\P^n(\C)$ with respect to $f$, and let $n_f$ denote the algebraic dimension of $f$ over $\mathcal K_{\mathcal Q}$. Then, for any $\epsilon>0$,
$$\|\ (q-\Delta_{\mathcal Q,f}(n_f+1)-\epsilon)T_f(r)\le\sum_{i=1}^{q}\dfrac{1}{\deg Q_i}N_{(Q_i,f)}(r).$$}
Here, $N_{(Q_i,f)}(r)$ denote the counting function of the divisor $f^*Q_i$ (see Section 2 for the definition).

Hence, the total defect bound obtained in this theorem is $\Delta_{\mathcal Q,f}(n_f+1)$. If $f$ is a meromorphic mapping from $\C^m$ into a projective subvariety $V\subset\P^n(\C)$ which is algebraically nondegenerate over $\mathcal K_{\mathcal Q}$, and if $\mathcal Q$ is in general position with respect to $V$, then $n_f=\dim V$ and $\Delta_{\mathcal Q,f}=1$. Therefore, the bound becomes $\dim V+1$, and we recover the second main theorem of Dethloff and Tan in \cite{DT2}. 

However, without imposing any algebraic nondegeneracy condition on $f$, the bound $\Delta_{\mathcal Q,f}(n_f+1)$ depends on $f$ and is generally difficult to estimate. In this paper, we establish a second main theorem whose total defect bound does not depend on the mapping, and whose counting functions are truncated at an explicit level. To state our result, we recall the following definition.

\vskip0.2cm
\noindent
{\bf Definition C.} (see \cite[Definition 3.4]{Q22c})
{\it Let $V$ be a subvariety of $\P^n(\C)$ of dimension $k$. Assume that $V$ is not generically contained in any $Q_j(z)^*\ (1\le j\le q)$. We define the distributive constant of $\{Q_1,\ldots,Q_q\}$ with respect to $V$ by
$$
\Delta_{\mathcal Q,V}:=\underset{\varnothing\ne\Gamma\subset\{1,\ldots,q\}}{\max}\dfrac{\sharp\Gamma}{\mathrm{codim}_V\left(\bigcap_{j\in\Gamma}Q_j(z)^*\right)}
$$
for generic points $z\in\C^m$.}

The family $\mathcal Q$ is said to satisfy the B\'{e}zout condition with respect to $f$ if for and subsets $\Gamma_1,\Gamma_2$ of $\{1,\ldots,q\}$,
$$ \mathrm{codim}_{V_z}\left(\bigcap_{j\in\Gamma_1\cup\Gamma_2}Q_j(z)^*\right)\le \mathrm{codim}_{V_z}\left(\bigcap_{j\in\Gamma_1}Q_j(z)^*\right)+\mathrm{codim}_{V_z}\left(\bigcap_{j\in\Gamma_2}Q_j(z)^*\right)$$
for generic points $z$. Note that, this condition is automatically satisfied if $V_z=\P^n(\C).$

In this paper, we prove the following theorem.

\begin{theorem}\label{1.1}
Let $f$ be a nonconstant meromorphic map of $\mathbf{C}^m$ into a projective subvariety $V\subset\P^n(\mathbf{C})$ of dimension $k$. Let $\mathcal Q=\{Q_i\}_{i=1}^q$ be a family of slowly (with respect to $f$) moving hypersurfaces in $\P^n(\C)$ satisfying the B\'{e}zout condition with respect to $f$ and $d=lcd(\deg Q_1,\ldots,\deg Q_q)$. Let $\Delta_{\mathcal Q,V}$ be the distributive constant of $\mathcal Q$ with respect to $V$ and $\delta_f$ the algebraic degree of $f$ over $\mathcal K_{\mathcal Q}$. Then for any $\epsilon >0$, 
$$  \|\ (q-\Delta_{\mathcal Q,V}(3k-1)-\epsilon)T_f(r)\le \sum_{i=1}^{q}\dfrac{1}{\deg Q_i}N^{[L-1]}_{(Q_i,f)}(r),$$
where $$L=d^k\delta_f(u+1)^k\left\lfloor\biggl(1+\frac{\epsilon}{4}\biggl)\cdot\mathrm{exp}\left(d^k\delta_f(u+1)^{k+q}\left(\frac{1}{2}+\frac{4k\Delta_{Q,V}}{\epsilon}\right)\right)\right\rfloor$$
with $u=\lceil 2k(2k+1)\Delta_{Q,V}d^k\delta_f(2k\Delta_{Q,V}+\epsilon)\epsilon^{-1}\rceil$. Moreover, if all $Q_i\ (1\le i\le q)$ are fixed hypersurfaces then we may take 
$$L=\left\lfloor d^{k^2+k}(\delta_f)^{k+1}e^{k+\frac{3}{2}}(4k)^k\Delta_{\mathcal Q,V}^k(2k\Delta_{\mathcal Q,V}\epsilon^{-1}+1)^k\right\rfloor.$$
\end{theorem}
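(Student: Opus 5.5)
We reduce the theorem to a second main theorem for the \emph{algebraic} setting, where $f$ is nondegenerate over $\mathcal K_{\mathcal Q}$ and lies in the generic fibre $V_z$. Then we compare the distributive constants of $\mathcal Q$ with respect to $V_z$ and with respect to $V$. The count $3k-1$ should come from this comparison.

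\textbf{Step 1 (reduction to $V_z$).} Replace $V$ by the moving variety $V_z=\bigcap_i R_i(z)^*$, which has dimension $n_f\le k$ and degree $\delta_f$. By construction, $f$ is algebraically nondegenerate over $\mathcal K_{\mathcal Q}$ as a map into $V_z$. Replace each $Q_i$ by $Q_i^{d/\deg Q_i}$ so that all degrees equal $d$. Since $N^{[L-1]}_{(Q_i^{d/d_i},f)}\le \frac{d}{d_i}N^{[L-1]}_{(Q_i,f)}$, it suffices to prove the inequality for the common degree $d$.

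\textbf{Step 2 (comparison of distributive constants).} Set $\Delta=\Delta_{\mathcal Q,V}$. The aim is to show
$$\Delta_{\mathcal Q,f}\,(n_f+1)\le \Delta(3k-1)$$
up to the replacement argument. The expected mechanism is as follows. A subfamily $\Gamma$ with $\mathrm{codim}_{V_z}\bigcap_{\Gamma}Q_j^*=c$ can be broken into at most $c$ pieces, using the B\'ezout condition and repeatedly adding hypersurfaces. Each added hypersurface cuts the dimension by at most one on $V$. A generic-position argument on $V$ (cutting by $\dim V-\dim V_z$ further generic sections) then bounds $\sharp\Gamma$ by roughly $\Delta\,(c+k-n_f)$. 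Optimizing over $n_f\in\{0,\dots,k\}$ and $c\le n_f+1$ gives a bound of the form $\Delta(n_f+1)+\Delta(\text{defect})\le\Delta(3k-1)$.

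In practice, rather than pass through Theorem B, I would run the replacing-hypersurfaces method of \cite{Q19,Q18} directly. For each point $z$ and each ordering of $|Q_i(\tilde f)|$, choose at most $n_f+1$ linear combinations $P_1,\dots,P_{n_f+1}$ of the smallest $Q_i$'s in general position on $V_z$. The B\'ezout condition and the definition of $\Delta$ control how many $Q_i$'s each $P_j$ absorbs: the $\ell$-th one absorbs at most about $\Delta\cdot(\text{codim jump})$ of them. This yields
$$\sum_i\log\frac{\|\tilde f\|^d}{|Q_i(\tilde f)|}\le \Delta'\sum_j\log\frac{\|\tilde f\|^d}{|P_j(\tilde f)|}$$
with $\Delta'\le \Delta(3k-1)/(n_f+1)$.

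\textbf{Step 3 (SMT for moving hypersurfaces in general position on $V_z$, with truncation).} Apply a Dethloff--Tan type filtration of $\mathcal K[x]_u/I(V_z)_u$, with $u$ as in the statement, to the $P_j$'s. The filtration should be indexed by monomials in the $P_j$. We then build the generalized Wronskian of a basis $\psi_1,\dots,\psi_M$ ($M=H_{V_z}(u)\approx d^k\delta_f u^k$) of the sections. This step has two parts. The logarithmic derivative lemma handles the error terms. The Evertse--Ferretti / Corvaja--Zannier counting of $\Delta$-weighted monomials gives the main term $\frac{uM}{(n_f+1)}(1-O(\epsilon))$ per $P_j$. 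Truncation at level $M-1$ follows because the Wronskian vanishing order at a zero of $P_j(\tilde f)$ is at most $M-1$ below the multiplicity. The factor $\exp(\cdots)$ in $L$ arises from the auxiliary slow-function field needed for moving targets: one multiplies the basis by products of coefficients $a_{iI}$ of total degree $\le$ some $p$, chosen so that the dimension ratio is within $1+\epsilon/4$. For fixed hypersurfaces no such enlargement is needed, which gives the simpler $L$.

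\textbf{Main obstacle.} The delicate part is Step 2: obtaining a constant $3k-1$ that is independent of $n_f$ and valid uniformly for all orderings. It is also nontrivial to ensure that the replacing hypersurfaces $P_j$ keep degree $d$ and stay slow, so that their counting functions are dominated by those of the $Q_i$ at the same truncation. I would first try the inductive dimension-drop construction with the B\'ezout condition, proving
$$\sharp\{i:\ Q_i\text{ absorbed at step }\ell\}\le\Delta\,(\mathrm{codim}_{V}\text{ jump})+\Delta(k-n_f)\ \text{at worst}.$$
Summing over $\ell\le n_f+1$ should then bound the total by $\Delta(2k+n_f-1)\le\Delta(3k-1)$.
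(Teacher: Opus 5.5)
Your Step 2 has a genuine gap. The inequality $\Delta_{\mathcal Q,f}(n_f+1)\le\Delta_{\mathcal Q,V}(3k-1)$ that you aim for is false in general. So is its pointwise form with a uniform weight $\Delta'\le\Delta_{\mathcal Q,V}(3k-1)/(n_f+1)$.

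Write $c(\Gamma)=\mathrm{codim}_{V_z}\bigcap_{j\in\Gamma}Q_j(z)^*$. Your own (correct) estimate $\sharp\Gamma\le\Delta_{\mathcal Q,V}\,(c(\Gamma)+k-n_f)$ follows simply from $V_z\subset V$. At $c(\Gamma)=1$ it only gives $\Delta_{\mathcal Q,f}(n_f+1)\le\Delta_{\mathcal Q,V}(k-n_f+1)(n_f+1)$, which is quadratic in $k$, and this bound is attained:
\begin{itemize}
\item Take $V=\P^k$ and at most $k+1$ fixed hyperplanes in general position. Then $\Delta_{\mathcal Q,V}=1$, and the B\'ezout condition holds because everything is linear.
\item Let $f$ be linearly nondegenerate into a linear subspace $W\cong\P^n$.
\item Choose $k-n+1$ of the hyperplanes generically among those containing a fixed hyperplane $U=\{\ell_U=0\}$ of $W$.
\end{itemize}
These hyperplanes all cut $W$ exactly in $U$, so $\Delta_{\mathcal Q,f}\ge k-n+1$. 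For $k=10$, $n=5$ this gives $(k-n+1)(n+1)=36>29=3k-1$.

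Your replacement inequality fails for the same reason. For $x\in W$ tending to a generic point of $U$, the left side is about $(k-n+1)\log(\|x\|/|\ell_U(x)|)$. The $P_j$ are in general position on $W$, so at most one of them can contain $U$. This forces $\Delta'\ge k-n+1$. The count in your last paragraph does not close either: adding $\Delta(\text{jump})+\Delta(k-n_f)$ over $n_f+1$ steps gives $\Delta(n_f+1)(k-n_f+1)$, not $\Delta(2k+n_f-1)$.

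The missing idea is to \emph{discard} the badly placed hypersurfaces instead of reweighting them. This is the paper's Lemma 4.1, a Heier--Levin type argument:
\begin{itemize}
\item Let $\mathcal S$ consist of all $\Gamma$ with $\sharp\Gamma/c(\Gamma)>\frac{2k-n_f+1}{n_f+1}\Delta_{\mathcal Q,V}$, and pick $\Gamma_1\in\mathcal S$ of maximal cardinality.
\item Combined with $\sharp\Gamma_1\le\Delta_{\mathcal Q,V}(k-n_f+c(\Gamma_1))$, this forces $c(\Gamma_1)\le\lfloor n_f/2\rfloor$. Hence $\sharp\Gamma_1\le\Delta_{\mathcal Q,V}(k-\lceil n_f/2\rceil)$.
\item The B\'ezout condition enters exactly here. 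If some $\Gamma_2\in\mathcal S$ were disjoint from $\Gamma_1$, then $c(\Gamma_1\cup\Gamma_2)\le c(\Gamma_1)+c(\Gamma_2)<n_f+1$ and $\Gamma_1\cup\Gamma_2\in\mathcal S$, contradicting maximality.
\item Therefore $\mathcal P=\{Q_i\}_{i\notin\Gamma_1}$ satisfies $\Delta_{\mathcal P,f}(n_f+1)\le(2k-n_f+1)\Delta_{\mathcal Q,V}$.
\end{itemize}
One then applies the Section 3 second main theorem to $\mathcal P$. That theorem is proved via Evertse--Ferretti on $\Phi_z(V_z)\subset\P^{q-1}$, with a Chow-weight lower bound in terms of $\Delta_{\mathcal P,f}$, not via replacement by hypersurfaces in general position. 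Finally one adds back the nonnegative counting functions for $i\in\Gamma_1$.

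The total loss is $\Delta_{\mathcal Q,V}(3k+1-n_f-\lceil n_f/2\rceil)\le\Delta_{\mathcal Q,V}(3k-1)$, since $n_f\ge1$. Each discarded $Q_i$ costs a full $T_f(r)$ globally. No uniform pointwise weight can express this trade-off, which is why your Step 2 cannot succeed as formulated.
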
 

Here, $\left\lceil x\right\rceil$ is the smallest integer greater than or equal to the real number $x$, and $\lfloor x\rfloor$ is the the greatest integer not exceeding $x$.

If the family $\mathcal Q$ is in $N$-subgeneral position with respect to $V$ then we will prove a more optimal result as follows.
\begin{theorem}\label{1.2}
Let $f$ be a nonconstant meromorphic map of $\mathbf{C}^m$ into a projective subvariety $V\subset\P^n(\mathbf{C})$ of dimension $k$. Let $\mathcal Q=\{Q_i\}_{i=1}^q$ be a family of slowly (with respect to $f$) moving hypersurfaces in $\P^n(\C)$ in $N$-subgeneral position with respect to $V$ satisfying the B\'{e}zout condition with respect to $f$ and $d=lcd(\deg Q_1,\ldots,\deg Q_q)$. Let $\delta_f$ the algebraic degree of $f$ over $\mathcal K_{\mathcal Q}$. Then for any $\epsilon >0$, 
$$  \|\ (q-3N+1-\epsilon)T_f(r)\le \sum_{i=1}^{q}\dfrac{1}{\deg Q_i}N^{[L-1]}_{(Q_i,f)}(r),$$
where $$L=d^k\delta_f(u+1)^k\left\lfloor\biggl(1+\frac{\epsilon}{4}\biggl)\cdot\mathrm{exp}\left(d^k\delta_f(u+1)^{k+q}\left(\frac{1}{2}+\frac{4N}{\epsilon}\right)\right)\right\rfloor$$
with $u=\lceil 2N(2k+1)d^k\delta_f(2N+\epsilon)\epsilon^{-1}\rceil$. Moreover, if all $Q_i\ (1\le i\le q)$ are fixed hypersurfaces then we may take 
$$L=\left\lfloor d^{k^2+k}(\delta_f)^{k+1}e^{k+\frac{3}{2}}(4N)^k(2N\epsilon^{-1}+1)^k\right\rfloor.$$
\end{theorem}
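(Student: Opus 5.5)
The plan is to deduce Theorem \ref{1.2} from Theorem \ref{1.1} (or, more precisely, from its proof) by controlling the distributive constant in the $N$-subgeneral position setting. The key observation is that if $\mathcal Q$ is in $N$-subgeneral position with respect to $V$, then for generic $z$ and every nonempty $\Gamma\subset\{1,\ldots,q\}$ we have
$$\mathrm{codim}_V\Bigl(\bigcap_{j\in\Gamma}Q_j(z)^*\Bigr)\ \ge\ \begin{cases}\sharp\Gamma-N+k,&\sharp\Gamma\le N+1,\\ k+1,&\sharp\Gamma\ge N+1.\end{cases}$$
This holds because adding one hypersurface drops the dimension of the intersection with $V$ by at most one, while $N+1$ of them have empty intersection with $V$. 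Hence $\sharp\Gamma/\mathrm{codim}_V(\cdot)\le N/k$ in the first case (the function $t\mapsto t/(t-N+k)$ is decreasing, and at $t=N$ it equals $N/k$) and $\le q/(k+1)$ in the second. The second bound is useless, so I will not apply Theorem \ref{1.1} verbatim. Instead I will rerun its proof, replacing the role of $\Delta_{\mathcal Q,V}$ by the pair of quantities the proof actually uses.

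Concretely, the proof of Theorem \ref{1.1} presumably proceeds as follows. At each point one selects the indices $\Gamma$ where $|Q_j(\tilde f)|$ is smallest. The $d$-th powers $Q_j^{d/\deg Q_j}$ are replaced, via the replacing-hypersurfaces lemma of \cite{Q19,Q18}, by $k+1$ (or fewer, $\mathrm{codim}$-many) generic linear combinations $P_1,\ldots,P_{k+1}$ in general position with respect to $V_z$. One then applies the Dethloff–Tan filtration / Chow weight estimate on $V_z$ with parameter $u$, and finally bounds the contribution of the small-value indices by $\Delta$ times the codimension.

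In the $N$-subgeneral case I will use the sharper bookkeeping. For a point where $N+1$ indices are small, the product $\prod_{j\in\Gamma}\|\tilde f\|^{d}/|Q_j|$ over $\sharp\Gamma=N$ of them is dominated by $\bigl(\prod_{i=1}^{k}\|\tilde f\|^d/|P_i|\bigr)^{N/k}$-type estimates. These come out as
$$\log\prod_{j\in\Gamma}\frac{\|\tilde f\|^{d}}{|Q_j(\tilde f)|}\le \frac{N}{k}\sum_{i}\log\frac{\|\tilde f\|^{d}}{|P_i(\tilde f)|}+O(1),$$
which is exactly where the factor $\Delta_{\mathcal Q,V}$ entered in Theorem \ref{1.1}. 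Substituting $N/k$ for $\Delta_{\mathcal Q,V}$ gives the bound $q-\frac{N}{k}(3k-1)-\epsilon$, which is at least as good as $q-3N+1-\epsilon$ since $\frac{N}{k}(3k-1)=3N-\frac Nk\le 3N-1$ when $N\ge k$. The truncation level follows the same substitution: in $u$ and $L$, each occurrence of $k\Delta_{\mathcal Q,V}$ becomes $N$. This matches the formulas stated, namely $2k\Delta\to 2N$ and $4k\Delta\to 4N$, and in the fixed case $(4k)^k\Delta^k\to(4N)^k$.

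The main obstacle is checking that the proof of Theorem \ref{1.1} only uses $\Delta_{\mathcal Q,V}$ through sets $\Gamma$ with $\sharp\Gamma\le N$, and not through larger $\Gamma$ where the ratio can be big. I expect to handle this by noting that at each point $z$ at most $N$ of the $Q_j(\tilde f)(z)$ can be much smaller than $\|\tilde f\|^{\deg Q_j}$, because of the subgeneral position together with the standard Nullstellensatz/resultant estimate for slowly moving hypersurfaces. The remaining $q-N$ terms are then bounded by $O(\log^+$ of slowly moving coefficients$)=o(T_f(r))$. Once this reduction is verified, the rest is the same computation as in Theorem \ref{1.1}, including the B\'ezout condition, which is used there to bound the Hilbert weight on $V_z$.
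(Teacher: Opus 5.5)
Your reduction rests on a false estimate. Since $t\mapsto t/(t-N+k)$ is decreasing, its value $N/k$ at $t=N$ is the \emph{smallest} ratio allowed, not the largest. From $\mathrm{codim}_V\ge\max\{1,\sharp\Gamma-N+k\}$ one only gets $\sharp\Gamma/\mathrm{codim}_V\le N-k+1$, and this bound is attained: for instance, $N-k+1$ copies of one hyperplane are permitted in $N$-subgeneral position.

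Your pointwise inequality with exponent $N/k$ fails for the same reason. Take $\P^2$ with $N=3$, $Q_1=Q_2=\{x_0=0\}$ and $Q_3=\{x_1=0\}$. As ${\bf x}\to(0:0:1)$ with $|x_0|/|x_1|\to0$, the left-hand side exceeds $\frac32\sum_{i=1}^{2}\log\frac{\|{\bf x}\|}{|P_i({\bf x})|}$ by at least $\frac12\log\frac{|x_1|}{|x_0|}-O(1)\to\infty$, for any two non-proportional linear forms $P_1,P_2$. The replacing-hypersurfaces method only yields the exponent $N-k+1$.

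With the correct constant, substitution gives the defect $(N-k+1)(3k-1)=3N-1+(3k-4)(N-k)$. This is strictly worse than $3N-1$ whenever $k\ge2$ and $N>k$. So Theorem~\ref{1.2} cannot be obtained by feeding a bound on the distributive constant into Theorem~\ref{1.1} or into its proof.

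What is missing is the place where the distributive constant actually enters. The proof of Theorem~\ref{1.1} has no replacing-hypersurfaces step. Theorem~\ref{main} works with $\Delta_{\mathcal P,f}$ relative to $V_z$. The constant $\Delta_{\mathcal Q,V}$ is used only in the Heier--Levin type selection lemma (Lemma 4.1), through the single inequality $\sharp\Gamma_1\le\Delta_{\mathcal Q,V}(k-n_f+c(\Gamma_1))$. The B\'ezout condition is used there to merge disjoint bad sets, not to bound Hilbert weights.

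For Theorem~\ref{1.2} the paper does not use $\Delta_{\mathcal Q,V}$ at all. In Lemma~\ref{4.2} that inequality is replaced by $\sharp\Gamma_1\le N-n_f+c(\Gamma_1)$. This follows from $V_z\subset V$ and $\dim\bigl(V\cap\bigcap_{j\in\Gamma_1}Q_j(z)^*\bigr)\le N-\sharp\Gamma_1$, and it is not of the form $\Delta\cdot\mathrm{codim}$ for any $\Delta<N-k+1$. The argument then runs as follows:
\begin{enumerate}
\item With the threshold $\frac{2N-n_f+1}{n_f+1}$, a bad set $\Gamma_1$ of maximal size must have $c(\Gamma_1)\le\lfloor n_f/2\rfloor$.
\item The B\'ezout condition shows that every bad set meets $\Gamma_1$.
\item Removing $\Gamma_1$ leaves $\mathcal P$ with $\sharp\Gamma\ge q-N+\lceil n_f/2\rceil$ and $(n_f+1)\Delta_{\mathcal P,f}\le 2N-n_f+1\le 2N$.
\item Theorem~\ref{main} applied to $\mathcal P$ gives the defect $3N-n_f-\lceil n_f/2\rceil+1\le 3N-1$.
\end{enumerate}
The stated $u$ and $L$ come from inserting $(n_f+1)\Delta_{\mathcal P,f}\le 2N$ and $n_f\le k$ into the formulas of Theorem~\ref{main}.
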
 
Note that, in the case where $f:\C^m\rightarrow\P^n(\C)$ is algebraically nondegenerate over $\mathcal K_f$ (the field of all meromorphic functions which are small with respect to $f$), and $\mathcal Q$ is in $N$-subgeneral position, Q. Cai and C.J. Yang \cite{CY} obtained the total defect bound $\frac{3}{2}(2N-n+1).$ Furthermore, if $\mathcal Q$ is in general position and satisfies the B\'ezout condition with respect to $f$, and if $n\le 8$, then L. Shi, Q. Yan, and G. Yu \cite{SYY} obtained the total defect bound $2n.$ However, all of these results were established without truncation levels for the counting functions.

\section{Notation and Auxialiary results}

\noindent
\textbf{(a) Some notation from Nevanlina theory.}

For $z = (z_1,\dots,z_m) \in \mathbb C^m$, we set $ \|z\|:= \big(|z_1|^2 + \dots + |z_m|^2\big)^{1/2}$ and define
\begin{align*}
v_{m-1}(z) &:= \big(dd^c \|z\|^2\big)^{m-1},\\
\sigma_m(z)&:= d^c\log\|z\|^2 \land \big(dd^c\log\|z\|^2\big)^{m-1} \text{on} \quad \mathbb C^m \setminus \{0\}.
\end{align*}
 For a divisor $\nu$ on  a $\mathbb C^m$ and a positive integer $M$ or $M= +\infty$, as usual we denote by $N^{[M]}(r,\nu)$ the counting function of $\nu$ with multiplicities truncated to level $M$.

For a meromorphic function $\varphi$ on $\C^m$, denote by $\nu_\varphi$ its divisor of zeros and set
$$N_{\varphi}(r)=N(r,\nu_{\varphi}), \ N_{\varphi}^{[M]}(r)=N^{[M]}(r,\nu_{\varphi})\ (r_0<r<R_0).$$

For brevity, we will omit the character $^{[M]}$ if $M=+\infty$.

Let $f : \mathbf{C}^m \longrightarrow \P^n(\mathbf{C})$ be a meromorphic mapping with a reduced representation
$\tilde f= (f_0,  \ldots , f_n)$. Set $\|\tilde f\| = \big(|f_0|^2 + \dots + |f_n|^2\big)^{1/2}$.
The characteristic function of $f$ is defined by 
\begin{align*}
T_f(r)= \int\limits_{\|z\|=r} \log\|\tilde f\| \sigma_m -\int\limits_{\|z\|=1}\log\|\tilde f\|\sigma_m.
\end{align*}

Let $\varphi$ be a nonzero meromorphic function on $\mathbf{C}^m$, which are occasionally regarded as a meromorphic map into $\P^1(\mathbf{C})$. The proximity function of $\varphi$ is defined by
$$m(r,\varphi):=\int_{\|z\|=r}\log \max\ (|\varphi|,1)\sigma_m.$$
The Nevanlinna's characteristic function of $\varphi$ is defined as follows
$$ T(r,\varphi):=N_{\frac{1}{\varphi}}(r)+m(r,\varphi). $$
Then 
$$T_\varphi (r)=T(r,\varphi)+O(1).$$
The function $\varphi$ is said to be small (with respect to $f$) if $\|\ T_\varphi (r)=o(T_f(r))$.

Let $Q$ be a moving hypersurface in $\P^n(\mathbf{C})$ of degree $d\ge 1$ given by
$$ Q(z)({\bf x})=\sum_{I\in\mathcal T_d}a_I{\bf x}^I, $$
where $a_I\in\mathcal M$ for all $I\in\mathcal T_d$. We will write $N^{[M]}_{(Q,f)}(r)$ for $N^{[M]}_{Q(\tilde f)}(r)$.
The proximity function of $f$ with respect to $Q$, denoted by $m_f (r,Q)$, is defined by
$$m_f (r,Q)=\int_{\|z\|=r}\log\dfrac{\|\tilde f\| ^d}{|Q(\tilde f)|}\sigma_m-\int_{\|z\|=1}\log\dfrac{\|\tilde f\| ^d}{|Q(\tilde f)|}\sigma_m.$$
This definition is independent of the choice of the reduced representation of $f$. 

If $Q$ is a slowly moving hypersurface with respect to $f$, then  the first main theorem in Nevanlinna theory for meromorphic mappings and moving hypersurfaces states
$$dT_f (r)=m_f (r,Q) + N_{Q(\tilde f)}(r)+o(T_f(r)).$$

\begin{lemma}[{Lemma on logarithmic derivative, see \cite{NO}}]\label{2.1}
Let $f$ be a nonzero meromorphic function on $\mathbf{C}^m$, which is consider as a meromorphic mapping into $\P^1(\C)$. Then 
$$\biggl|\biggl|\quad m\biggl(r,\dfrac{\mathcal{D}^\alpha (f)}{f}\biggl)=O(\log^+T_f(r))\ (\alpha\in \Z^m_+).$$
\end{lemma}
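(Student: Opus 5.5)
The statement just above is Lemma \ref{2.1}, the lemma on logarithmic derivatives for a nonzero meromorphic function $f$ on $\C^m$, which the paper quotes from \cite{NO}. I would prove it by reducing to the one-variable case through the standard averaging over complex lines. For $\alpha=(\alpha_1,\ldots,\alpha_m)\in\Z^m_+$, I first write $\frac{\mathcal D^\alpha f}{f}$ as a telescoping product of quotients of successive first-order derivatives:
$$\frac{\mathcal D^{\alpha}f}{f}=\prod_j\frac{\mathcal D^{\beta_{j+1}}f}{\mathcal D^{\beta_j}f},$$
where $\beta_{j+1}=\beta_j+e_{i_j}$. Since $\log^+$ is subadditive over products, it suffices to treat $|\alpha|=1$. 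The price is that each factor is a logarithmic derivative of $g=\mathcal D^{\beta_j}f$ rather than of $f$. So I also need the bound $T(r,\mathcal D^{\beta}f)\le (1+|\beta|)T(r,f)+O(\log^+T(r,f))$ outside a set of finite measure, and I get it inductively from the first-order case: $m(r,g')\le m(r,g)+m(r,g'/g)$ controls the proximity part, and the poles of $g'$ are those of $g$ with multiplicity raised by one, so $N(r,g')\le 2N(r,g)$. Consequently $\log^+T_{\mathcal D^\beta f}(r)=O(\log^+T_f(r))$.

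For the first-order case, $\frac{\partial f/\partial z_i}{f}$, I compare with the radial derivative. I would use the standard device: since $\sigma_m$ is invariant under the unitary group, I can replace $\partial/\partial z_i$ by $\partial/\partial z_1$ after a rotation. Then I write $z=(z_1,z')$ and consider the slice functions $f_{z'}(w)=f(w,z')$. A cleaner route is the classical one from \cite{NO}. Integrate over the unit sphere in $\C^m$ via the fibration by complex lines $\{\lambda\xi:\lambda\in\C\}$, $\xi\in S^{2m-1}$. This gives
$$\int_{\|z\|=r}\log^+\Bigl|\frac{\partial_{\xi}f}{f}\Bigr|\sigma_m=\int_{\xi}m\bigl(r,f_\xi'/f_\xi\bigr)\,d\mu(\xi),$$
where $f_\xi(\lambda)=f(\lambda\xi)$ is a one-variable meromorphic function and $\partial_\xi$ is the directional derivative. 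The one-variable lemma gives $m(r,f_\xi'/f_\xi)\le C\log^+T(R,f_\xi)+O(\log^+ R)$ for $R>r$. I apply it in its Gol'dberg--Grinshtein form, which is explicit in $r$, $R$, so that no exceptional set depends on $\xi$. Integrating over $\xi$ and using Jensen's inequality for the concave function $\log^+$, together with $\int T(R,f_\xi)d\mu(\xi)=T(R,f)+O(1)$ (Crofton), yields $m(r,\partial_\xi f/f)=O(\log^+T(R,f)+\log R)$. The partial derivative $\partial_{z_i}$ is a combination of directional derivatives $\partial_\xi$ for $\xi$ varying over a fixed basis after linear changes of coordinates. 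Each such change preserves $T$ up to $O(1)$, so it suffices to handle finitely many fixed directions. Finally, I choose $R=r+1/T(r,f)$ and apply the Borel lemma to replace $T(R,f)$ by $2T(r,f)$ outside a set of finite measure. The $O(\log R)$ term is absorbed: $f$ nonconstant gives $T(r,f)\ge c\log r$, and the constant case is trivial.

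The main obstacle is keeping the one-variable estimate uniform in $\xi$, so that it can be integrated without a $\xi$-dependent exceptional set. That is exactly why I would use the explicit inequality of Gol'dberg--Grinshtein type,
$$m(r,g'/g)\le C\bigl(\log^+T(R,g)+\log^+\tfrac{1}{R-r}+\log^+R+1\bigr),$$
valid for every pair $r<R$, with only the final Borel-lemma step carrying an exceptional set.
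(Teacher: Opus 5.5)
\textbf{Verdict: genuine gaps.} The paper gives no proof of this lemma, only the citation to \cite{NO}, so your sketch has to stand on its own. Two parts of it are sound: the reduction to $|\alpha|=1$, and the idea of using a one-variable estimate that is explicit in $r,R$, so that only the final Borel step produces an exceptional set. The several-variable reduction, however, has a genuine gap.

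\textbf{Slices only see the radial derivative.} For $z=\lambda\xi$ with $\|\xi\|=1$,
$$f_\xi'(\lambda)=\sum_j\xi_j\frac{\partial f}{\partial z_j}(z)=\lambda^{-1}\mathcal Rf(z),\qquad \mathcal R=\sum_j z_j\frac{\partial}{\partial z_j}.$$
So averaging over lines through the origin controls only $\int_{\|z\|=r}\log^+\bigl|\frac{\mathcal Rf}{rf}\bigr|\sigma_m$. The direction being differentiated, $z/\|z\|$, moves with the point. Your displayed identity therefore holds only if $\partial_\xi$ means differentiation in this moving direction; it is not an identity for a fixed direction.

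Your remark that $\partial/\partial z_i$ reduces to finitely many fixed directions after linear changes of coordinates cannot repair this. Indeed $\mathcal R(f\circ A)=(\mathcal Rf)\circ A$ for every $A\in GL_m(\C)$, so no linear change turns the radial derivative into a coordinate derivative. Concretely, for $f=z_2/z_1$ on $\C^2$ every slice $f_\xi$ is constant, so the slices carry no information, yet $\frac{\partial f/\partial z_1}{f}=-\frac{1}{z_1}$.

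This missing step is exactly what separates the $m$-variable lemma from the one-variable one. Writing $\|z\|^2\frac{\partial}{\partial z_j}=\bar z_j\mathcal R+\sum_i z_i\bigl(\bar z_i\frac{\partial}{\partial z_j}-\bar z_j\frac{\partial}{\partial z_i}\bigr)$, you still need a separate estimate for the complex-tangential part. Alternatively, use the differential-geometric proof. There one pulls back by $f$ the finite-mass form $\frac{i\,dw\wedge d\bar w}{|w|^2(1+\log^2|w|)}$ on $\P^1$ and traces it against $v_{m-1}$, which controls all $|\partial f/\partial z_j|^2$ simultaneously before a calculus lemma is applied.

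\textbf{The final absorption of $\log R$ is false.} From $T(r,f)\ge c\log r$ you only get $\log^+T(r,f)\ge\log\log r+O(1)$, not $\log r=O(\log^+T_f(r))$; think of an entire function with $T_f(r)\asymp(\log r)^2$. The inequality you quote carries a separate $\log^+R$ term. So even granting the first step, your argument proves only
$$\|\ m(r,\mathcal D^\alpha f/f)=O(\log^+T_f(r))+O(\log r),$$
which is weaker than the statement. To avoid the $\log r$ you need the Gol'dberg--Grinshtein estimate in its precise form, where $r$ and $R$ enter only through $\frac{R}{r(R-r)}$. Applied to the slices (that is, to the scale-invariant derivative $\frac{\mathcal Rf}{rf}$) with $R=r+1/T_f(r)$, it gives $O(\log^+T_f(r))$ with no $\log r$. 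The radial-to-coordinate step must then also be done without losing a power of $r$.

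\textbf{Smaller points.}
\begin{itemize}
\item $\log^+$ is not concave, so Jensen's inequality does not apply to it directly; use $\log(1+x)$ instead.
\item The one-variable constant involves the leading Laurent coefficient of $f_\xi$ at $\lambda=0$. This depends on $\xi$ and must be shown integrable over the sphere.
\item The identity $\int T(R,f_\xi)\,d\mu(\xi)=T(R,f)+O(1)$ fails when $0$ is an indeterminacy point: for $z_2/z_1$ the left side is $0$ and the right side is $\log R+O(1)$. The inequality $\le$, which is all you use, does hold.
\end{itemize}
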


Repeating the argument in (Prop. 4.5 \cite{Fu}), we have the following.

\begin{proposition}[{see \cite[Prop. 4.5]{Fu}}]\label{2.2}
Let $\Phi_1,...,\Phi_k$ be meromorphic functions on $\mathbf{C}^m$ such that $\{\Phi_1,...,\Phi_k\}$ 
are  linearly independent over $\mathbf{C}.$ Then  there exists an admissible set, which is uniquely chosen in an explicitly way,  
$$\{\alpha_i=(\alpha_{i1},...,\alpha_{im})\}_{i=1}^k \subset \Z^m_+$$
with $|\alpha_i|=\sum_{j=1}^{m}|\alpha_{ij}|\le i-1 \ (1\le i \le k)$ such that the following are satisfied:

(i)\  $\{{\mathcal D}^{\alpha_i}\Phi_1,...,{\mathcal D}^{\alpha_i}\Phi_k\}_{i=1}^{k}$ is linearly independent over $\mathcal M,$\ i.e., \ $\det{({\mathcal D}^{\alpha_i}\Phi_j)}\not\equiv 0,$ 

(ii) $\det \bigl({\mathcal D}^{\alpha_i}(h\Phi_j)\bigl)=h^{k}\cdot \det \bigl({\mathcal D}^{\alpha_i}\Phi_j\bigl)$ for
any nonzero meromorphic function $h$ on $\mathbf{C}^m.$
\end{proposition}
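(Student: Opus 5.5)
The statement immediately above is Proposition \ref{2.2}, a Wronskian-type result on generalized derivatives. I plan to prove it by adapting Fujimoto's inductive construction of an admissible set, then checking the scaling identity (ii) via the Leibniz rule.

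\textbf{Construction of the admissible set.} Consider the $\mathcal M$-vector space spanned by the vectors
$$
{\mathcal D}^{\alpha}\Phi:=({\mathcal D}^{\alpha}\Phi_1,\ldots,{\mathcal D}^{\alpha}\Phi_k)\in\mathcal M^k,\qquad \alpha\in\Z^m_+ .
$$
Order $\Z^m_+$ first by $|\alpha|$ and then lexicographically, and select $\alpha_1=0,\alpha_2,\ldots$ greedily: $\alpha_i$ is the first multi-index in this order for which ${\mathcal D}^{\alpha_i}\Phi$ is $\mathcal M$-linearly independent of ${\mathcal D}^{\alpha_1}\Phi,\ldots,{\mathcal D}^{\alpha_{i-1}}\Phi$. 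The ordering makes the choice unique and explicit.

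\textbf{Key step: the process produces $k$ vectors.} Let $E_\ell$ denote the $\mathcal M$-span of $\{{\mathcal D}^{\alpha}\Phi:|\alpha|\le\ell\}$. I will show that if $E_\ell=E_{\ell+1}$ for some $\ell$, then $E_\ell$ is stable under every $\partial/\partial z_j$. Indeed, differentiating a relation ${\mathcal D}^{\beta}\Phi=\sum c_\gamma{\mathcal D}^{\gamma}\Phi$ with $|\gamma|\le\ell$ gives terms $(\partial_j c_\gamma){\mathcal D}^{\gamma}\Phi\in E_\ell$ together with ${\mathcal D}^{\gamma+e_j}\Phi\in E_{\ell+1}=E_\ell$. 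Hence $E_\ell$ contains all derivatives of $\Phi$.

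Next, suppose $\dim E_\ell<k$. Then the annihilator $W=\{c\in\mathcal M^k: \sum_j c_j v_j=0\ \forall v\in E_\ell\}$ is a nonzero $\mathcal M$-subspace. Choose $c\in W$ with the minimal number of nonzero entries, normalized so that one entry equals $1$. Then $\partial_t c$ also annihilates $E_\ell$: differentiate $\sum_j c_j{\mathcal D}^\alpha\Phi_j=0$ and use the derivative-stability just proved. But $\partial_t c$ has fewer nonzero entries than $c$, so by minimality $\partial_t c=0$ for all $t$. Thus $c$ is constant, giving a nontrivial $\C$-linear relation $\sum c_j\Phi_j=0$, which contradicts linear independence over $\C$.

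Therefore the dimensions of $E_0\subsetneq E_1\subsetneq\cdots$ strictly increase until they reach $k$. Since $\dim E_0=1$, we get $\dim E_\ell\ge\min(\ell+1,k)$. Consequently the $i$-th selected index satisfies $|\alpha_i|\le i-1$, and the $k$ vectors ${\mathcal D}^{\alpha_i}\Phi$ are linearly independent, which is (i).

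\textbf{Proof of (ii).} By the Leibniz rule,
$$
{\mathcal D}^{\alpha_i}(h\Phi_j)=h\,{\mathcal D}^{\alpha_i}\Phi_j+\sum_{\beta<\alpha_i}c_{\alpha_i\beta}\,{\mathcal D}^{\alpha_i-\beta}h\,{\mathcal D}^{\beta}\Phi_j ,
$$
where the row of the correction terms is independent of $j$. So it suffices that the set is closed under taking smaller indices: if $\alpha_i$ is selected and $\beta\le\alpha_i$, then $\beta=\alpha_{i'}$ for some $i'<i$.

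I expect this closure property to be the main obstacle, and I would handle it with Fujimoto's argument. If $\beta\le\alpha_i$ were not selected, then ${\mathcal D}^\beta\Phi$ lies in the span of the earlier vectors. Differentiating this relation by $\alpha_i-\beta$ and inducting along the order expresses ${\mathcal D}^{\alpha_i}\Phi$ through vectors preceding it, contradicting the selection of $\alpha_i$. This step requires care with the lexicographic tie-breaking; that is precisely where the explicit ordering is used.

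Granting closure, subtracting suitable $\mathcal M$-multiples of earlier rows from each row (row operations with coefficients built from derivatives of $h$, which do not change the determinant) reduces row $i$ to $h\,{\mathcal D}^{\alpha_i}\Phi_j$. The determinant then equals $h^k\det({\mathcal D}^{\alpha_i}\Phi_j)$, which is (ii).
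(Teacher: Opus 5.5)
Your argument is correct and is essentially Fujimoto's argument for Prop.~4.5. The paper does not reprove this result; it only invokes it (``repeating the argument in Prop.~4.5 of Fujimoto''), and your proof supplies the same ingredients: strict growth of $\dim E_\ell$ via derivative-stability and a minimal-support annihilator, greedy selection in graded order, and the Leibniz rule for (ii). The step you flag as the main obstacle is easy: graded-lex order is compatible with addition, so your differentiation argument gives the closure property directly, and for (ii) alone you only need each $\mathcal D^{\beta}\Phi$ with $\beta<\alpha_i$ to lie in the $\mathcal M$-span of the earlier rows, which the greedy selection already guarantees since $|\beta|<|\alpha_i|$.
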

The determinant $\det \bigl({\mathcal D}^{\alpha_i}\Phi_j\bigl)$ is usually called the general Wronskian of $\{\Phi_1,...,\Phi_k\}$.

\vskip0.2cm
\noindent
\textbf{(b) Some auxiliary results.}  Let $X\subset\P^n(\C)$ be a projective variety of dimension $k$ and of degree $\delta$. For $\textbf{a} = (a_0,\ldots,a_n)\in\mathbb Z^{n+1}_{\ge 0}$ we write ${\bf x}^{\bf a}$ for the monomial $x^{a_0}_0\cdots x^{a_n}_n$. Denote by $I_X$ the the prime ideal in $\C[x_0,\ldots,x_n]$ defining $X$ and by $\C[x_0,\ldots,x_n]_u$ the vector space of homogeneous polynomials in $\C[x_0,\ldots,x_n]$ of degree $u$ (including $0$). For $u= 1, 2,\ldots,$ set $(I_X)_u:=\C[x_0,\ldots,x_n]_u\cap I_X$ and define the Hilbert function $H_X$ of $X$ by
\begin{align*}
H_X(u):=\dim\C[x_0,\ldots,x_n]_u/(I_X)_u.
\end{align*}
For a tuple ${\bf c}=(c_0,\ldots,c_n)$ in $\mathbb R^{n+1}_{\ge 0}$, denote by $e_X({\bf c})$ the Chow weight of $X$ with respect to ${\bf c}$. The $u$-th Hilbert weight $S_X(u,{\bf c})$ of $X$ with respect to ${\bf c}$ is defined by
\begin{align*}
S_X(u,{\bf c}):=\max\sum_{i=1}^{H_X(u)}{\bf a}_i\cdot{\bf c},
\end{align*}
where the maximum is taken over all sets of monomials ${\bf x}^{{\bf a}_1},\ldots,{\bf x}^{{\bf a}_{H_X(u)}}$ whose residue classes modulo $I_X$ form a basis of $\C[x_0,\ldots,x_n]_u/(I_X)_u.$

The following theorem is due to J. Evertse and R. Ferretti \cite{EF1}.
\begin{theorem}[{see \cite[Theorem 4.1]{EF1}}]\label{2.1}
Let $X\subset\P^n(\C)$ be an algebraic variety of dimension $k$ and degree $\delta$. Let $u>\delta$ be an integer and let ${\bf c}=(c_0,\ldots,c_n)\in\mathbb R^{n+1}_{\geqslant 0}$.
Then
$$ \dfrac{1}{uH_X(u)}S_X(u,{\bf c})\ge\dfrac{1}{(k+1)\delta}e_X({\bf c})-\dfrac{(2k+1)\delta}{u}\cdot\left (\max_{i=0,\ldots,n}c_i\right).$$
\end{theorem}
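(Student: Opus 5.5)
The plan is to follow Evertse and Ferretti and reduce the inequality to monomial (Stanley–Reisner type) data by a Gröbner degeneration. Both $S_X(u,{\bf c})$ and $e_X({\bf c})$ are continuous and homogeneous of degree one in ${\bf c}$. It therefore suffices to treat ${\bf c}\in\mathbb Z^{n+1}_{\ge 0}$, after clearing denominators for rational ${\bf c}$ and then passing to the limit. We may also normalize so that $\max_i c_i$ is fixed.

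\textbf{Step 1 (degeneration).} For integral ${\bf c}$, consider the one-parameter subgroup $\lambda(t)=\mathrm{diag}(t^{c_0},\ldots,t^{c_n})$. Let $X_0=\lim_{t\to 0}\lambda(t)X$ be the flat limit cycle. Its ideal is the initial ideal $\mathrm{in}_{\bf c}(I_X)$, refined by a monomial order to become a monomial ideal $J$.

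\begin{itemize}
\item By flatness, $H_{X_0}(u)=H_X(u)$ and the degree is still $\delta$.
\item The standard monomials of $J$ in degree $u$ give a basis of $\C[x_0,\ldots,x_n]_u/(I_X)_u$ which maximizes $\sum{\bf a}_i\cdot{\bf c}$. This is the usual greedy/Gröbner argument, with the order chosen compatible with $-{\bf c}$ or ${\bf c}$ according to convention. Hence $S_X(u,{\bf c})=S_{J}(u,{\bf c})$.
\item The Chow form is compatible with this limit, so $e_X({\bf c})=e_{X_0}({\bf c})$, computed from the $k$-dimensional components of $J$.
\end{itemize}

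\textbf{Step 2 (monomial case).} The top-dimensional part of $V(J)$ is a union of coordinate $k$-planes $L_\sigma=\{x_j=0,\ j\notin\sigma\}$, where $|\sigma|=k+1$, with multiplicities $m_\sigma$ satisfying $\sum m_\sigma=\delta$. For such a plane, $e_{L_\sigma}({\bf c})=\sum_{j\in\sigma}c_j$, and $e_{X_0}=\sum m_\sigma e_{L_\sigma}$. By symmetry, the monomials of degree $u$ in the variables of $\sigma$ give
$$S_{L_\sigma}(u,{\bf c})=\frac{u}{k+1}\binom{u+k}{k}\sum_{j\in\sigma}c_j .$$
So the expected main term is $\frac{u H_X(u)}{(k+1)\delta}e_X({\bf c})$.

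\textbf{Step 3 (comparison, the main obstacle).} We must show that the standard monomials of $J$ of degree $u$ contain, up to a controlled number of exceptions, $m_\sigma$ "copies" of the monomials supported on each $\sigma$. I would do this with a prime filtration of $\C[x]/J$ by modules $\C[x]/\mathfrak p$ shifted in degree. Each $k$-dimensional factor $\mathfrak p=I_{L_\sigma}$ occurs $m_\sigma$ times. All other factors have dimension $<k$ or shifts bounded in terms of $\delta$; Evertse–Ferretti bound the shifts by using $u>\delta$ and degree bounds for generators of monomial ideals of degree $\delta$. Since all weights satisfy ${\bf a}\cdot{\bf c}\ge 0$, discarding monomials only lowers $S$, so a lower bound survives.

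Comparing the counts then gives
$$S_X(u,{\bf c})\ge \frac{u\,H_X(u)}{(k+1)\delta}e_X({\bf c}) - (2k+1)\delta\, H_X(u)\max_i c_i .$$
Here the discrepancy between $\binom{u+k}{k}\delta$ and $H_X(u)$, together with the shift losses, are each bounded by about $k\delta\binom{u+k-1}{k-1}$ monomials. Each lost monomial has weight at most $u\max c_i$, and we use $H_X(u)\ge\binom{u+k}{k}$ for $u>\delta$. Dividing by $uH_X(u)$ gives the stated inequality.

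The delicate bookkeeping is entirely in Step 3, namely obtaining the explicit constant $(2k+1)\delta$ uniformly in $u>\delta$. If I got stuck there, I would simply quote \cite[Theorem 4.1]{EF1}, since the paper assumes it.
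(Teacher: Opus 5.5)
The paper does not prove this statement. It quotes it verbatim from Evertse--Ferretti \cite[Theorem 4.1]{EF1}, so your fallback of citing it is exactly the paper's treatment.

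\textbf{Steps 0--2.} Read as a self-contained proof, these are sound, with one caveat on direction. You must degenerate to the initial ideal of \emph{lowest} ${\bf c}$-weight forms, not to $\mathrm{in}_{\bf c}(I_X)$ in the usual highest-weight sense. With the highest-weight version, the standard monomials form the minimal-weight basis and the limit Chow form is the lowest-weight part of $F_X$. In the correct direction, the standard monomials realize $S_X(u,{\bf c})$ by the greedy argument, and the Chow form of the limit cycle is the top-weight part of $F_X$. Hence $H_X$, $S_X(u,{\bf c})$ and $e_X({\bf c})$ are all preserved, and your formulas for $S_{L_\sigma}$ and $e_{L_\sigma}$ are correct. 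The genuine gap is Step 3, which carries the entire content of the theorem.

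\textbf{The gap in Step 3.} First, ``discarding monomials only lowers $S$'' is not enough. The main term on the right, $\frac{uH_X(u)}{(k+1)\delta}e_X({\bf c})$, is proportional to $H_X(u)$, which still counts every discarded standard monomial, and these may have weight $0$. Your Step 3 never uses that $J$ is an initial ideal of a \emph{prime} ideal, and for general monomial ideals the inequality is false.
\begin{itemize}
\item Take $J=(x_0^M,x_0x_1)=(x_0)\cap(x_0^M,x_1)\subset\C[x_0,x_1,x_2]$. Here $k=1$, $\delta=1$, the only top-dimensional factor $\C[x]/(x_0)$ has shift $0$, and the other factors are $0$-dimensional.
\item For ${\bf c}=(0,1,0)$ and $u\ge M$ one gets $S=\binom{u+1}{2}$, $H=u+M$ and $e=1$.
\item The required inequality $\frac{u+1}{2(u+M)}\ge\frac12-\frac3u$ then fails, e.g.\ for $M=20$, $u=100$.
\end{itemize}
So you need an input that uses primality, such as the upper bound $H_X(u)\le\delta\binom{u+k}{k}$ (Chardin). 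You only invoke the lower bound $H_X(u)\ge\binom{u+k}{k}$, which holds for every $u$ by Noether normalization.

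Second, the same example shows that generator degrees of a monomial ideal are not bounded in terms of its degree, so your mechanism for bounding the shifts is unavailable. What does bound them is the $\mathfrak p_\sigma$-primary component of $J$. It has exactly $m_\sigma$ standard monomials ${\bf x}^{\bf b}$ in the variables $x_j$, $j\notin\sigma$, each of degree $\le m_\sigma-1$. Moreover ${\bf x}^{\bf b}{\bf x}^{\bf a}\notin J$ for every monomial ${\bf x}^{\bf a}$ in the variables of $\sigma$.

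Third, your bookkeeping is off in order of magnitude. For a plane curve of degree $\delta$ one has $\delta(u+1)-H_X(u)=\binom{\delta}{2}$, and the shift loss for $J=(x_0^\delta)$ is also $\binom{\delta}{2}$. Both exceed your bound $k\delta\binom{u+k-1}{k-1}=\delta$ once $\delta\ge4$. These losses, and the overlaps between the copies attached to different $\sigma$, are $O(\delta/u)$ only when compared with quantities of size $\delta\binom{u+k}{k}$, not $\binom{u+k}{k}$. Even granting your count, dividing $k\delta\binom{u+k-1}{k-1}\cdot u\max_i c_i$ by $uH_X(u)\ge u\binom{u+k}{k}$ gives $\frac{k^2\delta}{u+k}\max_i c_i$ per error source. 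For $k\ge3$ and large $u$ this exceeds $\frac{(2k+1)\delta}{u}\max_i c_i$. So the sketch does not deliver the explicit constant $(2k+1)\delta$ valid for all $u>\delta$, which is exactly what the theorem asserts.
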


Let $\mathcal Q=\{Q_1,\ldots,Q_q\}$ be $q$ moving hypersurfaces in $\P^n(\C)$. Denote by $\mathcal C_{\mathcal Q}$ the set of all non-negative functions $h : \mathbf{C}^m\setminus A\longrightarrow [0,+\infty]$, which are of the form
$$ h=\dfrac{|g_1|+\cdots +|g_l|}{|g_{l+1}|+\cdots +|g_{l+k}|}, $$
where $k,l\in\N,\ g_1,...., g_{l+k}\in\mathcal K_{\mathcal Q}\setminus\{0\}$ and $A\subset\mathbf{C}^m$, which may depend on
$g_1,....,g_{l+k}$, is an analytic subset of codimension at least two. Then, for every moving hypersurface $Q$ in $\mathcal K_{\mathcal Q}[x_0,\ldots,x_n]$ of degree $d$, we have
$$ Q(z)({\bf x})\le c(z)\|{\bf x}\|^d $$
for some $c\in\mathcal C_{\mathcal Q}$.

Moreover, if $f$ is a meromorphic mapping from $\C^m$ into $\P^n(\C)$ and $V_z=\bigcap_{i=1}^pR_i(z)^*$ for generic points $z$, where $\{R_1,\ldots,R_q\}$ is the minimal generating set of $I(f(\C^m))$, then we have the following lemma. 

\begin{lemma}[{see \cite[Lemma 3.2]{Q22c}}]\label{2.4}
With the above notation, let $1\le j_1\le\cdots\le j_k\le q$. Suppose that there exists $z_0\in\C^m$ such that $V_{z_0}\cap\bigcap_{s=1}^kQ_{j_s}(z_0)^*=\emptyset.$ Then we have $V_z\cap\bigcap_{s=1}^kQ_{j_s}(z)^*=\emptyset$ for generic points $z$, and there exists a function $c\in\mathcal C_{\mathcal Q}$ such that
$$ \|\tilde f(z)\|\le c(z)\max_{1\le s\le k}\{Q_{j_s}(\tilde f)(z)\}.$$
\end{lemma}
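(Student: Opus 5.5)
The plan is to turn the emptiness hypothesis into an explicit Nullstellensatz identity with coefficients in $\mathcal K_{\mathcal Q}$, and then evaluate that identity along $\tilde f$. First I would normalize. Dividing each $Q_{j_s}$ by one of its nonvanishing coefficients, I may assume $Q_{j_s}\in\mathcal K_{\mathcal Q}[x_0,\ldots,x_n]$. This changes $Q_{j_s}(\tilde f)$ only by a meromorphic factor, and that factor can be absorbed later, or it is harmless if the lemma is read with normalized $Q$'s, as in the paper's convention. Replacing $Q_{j_s}$ by a suitable power, I may also assume that all $Q_{j_s}$ have a common degree $D$. The statement is then read as $\|\tilde f\|^D\le c\max_s|Q_{j_s}(\tilde f)|$, which is the homogeneous form of the claimed inequality. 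I would also take $z_0$ outside the poles and common zeros of the coefficients of the $R_t$ and $Q_{j_s}$; if necessary I would perturb it, since emptiness of an intersection of projective varieties is an open condition in the coefficients.

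\textbf{Step 1: rank condition at $z_0$.} Since $V_{z_0}\cap\bigcap_s Q_{j_s}(z_0)^*=\varnothing$, the projective Nullstellensatz gives an integer $N$ such that every monomial of degree $N$ lies in the ideal generated by $R_1(z_0),\ldots,R_p(z_0),Q_{j_1}(z_0),\ldots,Q_{j_k}(z_0)$. Equivalently, the $\C$-linear map
$$(A_1,\ldots,A_p,B_1,\ldots,B_k)\mapsto \sum_t A_tR_t(z_0)+\sum_s B_sQ_{j_s}(z_0)$$
is surjective, where $A_t$ and $B_s$ run over homogeneous forms of the complementary degrees and the image lies in $\C[x_0,\ldots,x_n]_N$. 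Its matrix $M(z)$ has entries that are coefficients of the $R_t(z)$ and $Q_{j_s}(z)$, so these entries lie in $\mathcal K_{\mathcal Q}$. Surjectivity at $z_0$ means that some maximal minor of $M(z)$ does not vanish at $z_0$. That minor is therefore a nonzero element of $\mathcal K_{\mathcal Q}$, so it is nonzero for generic $z$. Hence the map is surjective for generic $z$, and $V_z\cap\bigcap_sQ_{j_s}(z)^*=\varnothing$ generically. This proves the first assertion.

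\textbf{Step 2: explicit identity and estimate.} Using Cramer's rule with that minor, I solve over the field $\mathcal K_{\mathcal Q}$:
$$x_i^N=\sum_t A_{it}R_t+\sum_s B_{is}Q_{j_s},\qquad A_{it},B_{is}\in\mathcal K_{\mathcal Q}[x_0,\ldots,x_n],\quad 0\le i\le n.$$
Substituting $\tilde f$ and using $R_t(\tilde f)\equiv0$ gives $f_i^N=\sum_sB_{is}(\tilde f)Q_{j_s}(\tilde f)$. Each $B_{is}$ is homogeneous of degree $N-D$ with coefficients in $\mathcal K_{\mathcal Q}$, so by the remark preceding the lemma $|B_{is}(\tilde f)|\le c_{is}\|\tilde f\|^{N-D}$ with $c_{is}\in\mathcal C_{\mathcal Q}$. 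Taking the maximum over $i$ and using $\|\tilde f\|^N\le (n+1)^{N/2}\max_i|f_i|^N$, I get
$$\|\tilde f\|^N\le c\,\|\tilde f\|^{N-D}\max_s|Q_{j_s}(\tilde f)|,$$
where $c\in\mathcal C_{\mathcal Q}$ because sums and constant multiples of functions in $\mathcal C_{\mathcal Q}$ stay in $\mathcal C_{\mathcal Q}$. Cancelling $\|\tilde f\|^{N-D}$ then gives the desired bound off an analytic set of codimension at least two.

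\textbf{Main obstacle.} The delicate point is Step 1: passing from emptiness at one point to a \emph{uniform} degree $N$ and a single minor defined over $\mathcal K_{\mathcal Q}$, rather than over $\mathcal M$. The coefficients of the $R_t$ and the normalized $Q_{j_s}$ lie in $\mathcal K_{\mathcal Q}$, so every minor is a polynomial in elements of $\mathcal K_{\mathcal Q}$, and the Cramer solution stays in $\mathcal K_{\mathcal Q}$. That is exactly what places the final constant in $\mathcal C_{\mathcal Q}$. I would also check that $z_0$ can be chosen among the regular points of all coefficients, so that evaluating at $z_0$ is legitimate.
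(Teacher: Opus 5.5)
Your proof is correct. The paper does not prove this lemma itself but quotes it from \cite{Q22c}, and your route is essentially the standard argument used there: the projective Nullstellensatz at one point, a nonvanishing maximal minor with entries in $\mathcal K_{\mathcal Q}$ to pass to generic $z$, Cramer's rule over $\mathcal K_{\mathcal Q}$, and substitution of $\tilde f$ using $R_t(\tilde f)\equiv 0$. Your homogeneous reading $\|\tilde f\|^{D}\le c\max_s|Q_{j_s}(\tilde f)|$ is the correct form of the printed inequality, and stating the genericity step as surjectivity onto all of $\C[x_0,\ldots,x_n]_N$, rather than as solvability for a single $x_i^N$, is what makes the passage from $z_0$ to generic $z$ rigorous.
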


The following lemma is due to the first author \cite{Q22b}.
\begin{lemma}[{see \cite[Lemma 3.2]{Q22b}}]\label{2.2}
Let $Y$ be a projective subvariety of $\P^R(\C)$ of dimension $k\ge 1$ and degree $\delta_Y$. Let $\ell\ (\ell\ge k+1)$ be an integer and let ${\bf c}=(c_0,\ldots,c_R)$ be a tuple of non-negative reals. Let $\mathcal H=\{H_0,\ldots,H_R\}$ be a set of hyperplanes in $\P^R(\C)$ defined by $H_{i}=\{y_{i}=0\}\ (0\le i\le R)$. Let $\{i_1,\ldots, i_\ell\}$ be a subset of $\{0,\ldots,R\}$ such that:
\begin{itemize}
\item[(1)] $c_{i_\ell}=\min\{c_{i_1},\ldots,c_{i_\ell}\}$,
\item[(2)] $Y\cap\bigcap_{j=1}^{\ell-1}H_{i_j}\ne \emptyset$, 
\item[(3)] and $Y\not\subset H_{i_j}$ for all $j=1,\ldots,\ell$.
\end{itemize}
Let $\Delta_{\mathcal H,Y}$ be the distributive constant of the family $\mathcal H=\{H_{i_j}\}_{j=1}^\ell$ with respect to $Y$. Then
$$e_Y({\bf c})\ge \frac{\delta_Y}{\Delta_{\mathcal H,Y}}(c_{i_1}+\cdots+c_{i_\ell}).$$
\end{lemma}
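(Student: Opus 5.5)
The plan is to reduce the statement to a filtration inequality for the Chow weight, and then use the definition of the distributive constant together with an Abel summation.

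\textbf{Step 1 (ordering and filtration).} Since $e_Y$ depends only on ${\bf c}$ and both sides of the claimed inequality are symmetric in $i_1,\ldots,i_{\ell-1}$, I first relabel these indices so that $c_{i_1}\ge c_{i_2}\ge\cdots\ge c_{i_{\ell-1}}$. By hypothesis (1) this ordering then extends to $c_{i_{\ell-1}}\ge c_{i_\ell}$. For $0\le t\le \ell$ set
$$Y_t:=Y\cap\bigcap_{j=1}^tH_{i_j},\qquad \kappa_t:=\mathrm{codim}_Y Y_t .$$
Then $\kappa_0=0$ and $\kappa_t$ is nondecreasing in $t$. Hypothesis (3) gives $\kappa_1=1$, and hypothesis (2) gives $\kappa_{\ell-1}\le k$. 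Moreover, applying the definition of $\Delta_{\mathcal H,Y}$ to the subfamily $\Gamma=\{i_1,\ldots,i_t\}$ gives
$$\kappa_t\ge \frac{t}{\Delta_{\mathcal H,Y}}\quad (1\le t\le\ell).$$

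\textbf{Step 2 (filtration inequality for the Chow weight; the main obstacle).} The key claim is
$$e_Y({\bf c})\ \ge\ \delta_Y\sum_{t=1}^{\ell}(\kappa_t-\kappa_{t-1})\,c_{i_t}.$$
I would prove it from the description of $e_Y({\bf c})$ as the order of vanishing at $\tau=0$ of the Chow form $F_Y$ evaluated at the $\tau^{\bf c}$-twisted variables, as in Evertse--Ferretti and Ru. The strategy is the following.

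First, decompose ${\bf c}$ (after lowering the coordinates outside $\{i_1,\ldots,i_\ell\}$ to $0$, which is allowed by monotonicity of $e_Y$) as
$${\bf c}=\sum_{t=1}^{\ell}(c_{i_t}-c_{i_{t+1}})\,{\bf 1}_{\{i_1,\ldots,i_t\}},\qquad c_{i_{\ell+1}}:=0 .$$
All coefficients are non-negative by Step 1.

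Second, observe that every monomial of $F_Y$ must have total degree, in the blocks of variables attached to the coordinates $i_1,\ldots,i_t$, at least $\delta_Y\kappa_t$. The reason is that the Chow form vanishes to order $\ge \delta_Y\cdot\mathrm{codim}_Y Y_t$ along the locus where the $k+1$ linear forms have no components in the remaining coordinates. In the case $Y_\ell=\varnothing$ one has $\kappa_\ell=k+1$; this is where $\ell\ge k+1$ and the Evertse--Ferretti bound $e_Y({\bf c})\ge\delta_Y(c_{i_1}+\cdots+c_{i_{k+1}})$ for empty intersections enter.

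Third, summing these monomial degree bounds along the decomposition gives
$$e_Y({\bf c})\ge\delta_Y\sum_t\kappa_t\,(c_{i_t}-c_{i_{t+1}}),$$
which is the claim after Abel summation. Making the order-of-vanishing estimate precise, especially the case of nonempty but positive-dimensional intersections, is the step I expect to require the most care.

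\textbf{Step 3 (conclusion).} Using Abel summation and then Step 1,
$$\sum_{t=1}^{\ell}(\kappa_t-\kappa_{t-1})c_{i_t}=\sum_{t=1}^{\ell}\kappa_t\,(c_{i_t}-c_{i_{t+1}})\ \ge\ \frac{1}{\Delta_{\mathcal H,Y}}\sum_{t=1}^{\ell}t\,(c_{i_t}-c_{i_{t+1}})=\frac{1}{\Delta_{\mathcal H,Y}}\sum_{t=1}^{\ell}c_{i_t}.$$
The inequality in the middle uses that every difference $c_{i_t}-c_{i_{t+1}}$ is non-negative, including $c_{i_\ell}-0$. Combining this with Step 2 yields $e_Y({\bf c})\ge \frac{\delta_Y}{\Delta_{\mathcal H,Y}}(c_{i_1}+\cdots+c_{i_\ell})$.
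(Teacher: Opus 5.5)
The target inequality of your Step 2, $e_Y({\bf c})\ge\delta_Y\sum_t(\kappa_t-\kappa_{t-1})c_{i_t}$ for $c_{i_1}\ge\cdots\ge c_{i_\ell}$, is true. Steps 1 and 3 are also fine: the Abel summation with $\kappa_t\ge t/\Delta_{\mathcal H,Y}$ is the right way for the distributive constant to enter. The paper only quotes this lemma, so I am judging your argument on its own. The mechanism you sketch for Step 2 does not work, and this is a genuine gap.

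\textbf{What goes wrong.} In the Evertse--Ferretti normalization (the one under which their Theorem 4.1, quoted in Section 2, holds), $e_Y({\bf c})$ is the \emph{largest} exponent of $\tau$ in $F_Y(\tau^{\bf c}{\bf u})$. That is, it is the maximum of the ${\bf c}$-weight over the monomials of $F_Y$, not the order of vanishing at $\tau=0$. With the minimum, even the Evertse--Ferretti bound fails. Take the line $Y=\{y_0+y_1+y_2=0\}\subset\P^2$, so $F_Y=\det({\bf u}_0,{\bf u}_1,(1,1,1))$. This $F_Y$ contains $u_{02}u_{10}$, of weight $c_0<c_0+c_1$ for ${\bf c}=(c_0,c_1,0)$, although $Y\cap H_0\cap H_1=\varnothing$.

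Your claim that every monomial has degree $\ge\delta_Y\kappa_t$ in the block of $i_1,\ldots,i_t$ is therefore false. The same $F_Y$ contains $u_{01}u_{12}$, which has degree $0$ in the $y_0$-block although $\kappa_1=1$.

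Your justification is also inverted. Vanishing along the locus of forms supported on $\{i_1,\ldots,i_t\}$ bounds the degree in the \emph{complementary} variables. Moreover, $F_Y$ vanishes on that locus only when $Y_t\ne\varnothing$; when $Y_t=\varnothing$ its restriction there is not identically zero, so the order is $0$.

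What does hold for each $t$ separately is $e_Y({\bf 1}_{\{i_1,\ldots,i_t\}})\ge\delta_Y\kappa_t$, i.e.\ the existence of \emph{some} good monomial. But $e_Y$ is a maximum of linear functions, hence subadditive, so these bounds cannot be added along your decomposition of ${\bf c}$. You need one monomial (or one monomial basis of $\C[Y]_u$) that is good for all $t$ simultaneously. You also cannot simply apply Evertse--Ferretti to the hyperplanes at which $\kappa$ jumps. For the conic $Y=\{y_1y_2=y_0^2\}$ with the order $y_0,y_1,y_2$, the jumps occur at $t=1,3$, yet $Y\cap H_0\cap H_2=\{(0:1:0)\}\ne\varnothing$.

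\textbf{How to repair Step 2.} The standard argument runs as follows.
\begin{enumerate}
\item Note that $\kappa_t-\kappa_{t-1}\in\{0,1\}$, since each step is a hyperplane section. Let $t_1<t_2<\cdots$ be the jump indices.
\item If $Y_\ell\ne\varnothing$, continue the chain with the remaining coordinates, all of weight $0$, until the intersection is empty. This gives exactly $k+1$ jumps.
\item Choose generic linear forms $L_s$ in $y_{i_1},\ldots,y_{i_{t_s}}$. Every component of $Y\cap\{L_1=\cdots=L_{s-1}=0\}$ has dimension $k-s+1>\dim Y_{t_s}$, so $L_s$ does not vanish identically on it. Hence $Y\cap\{L_1=\cdots=L_{k+1}=0\}=\varnothing$.
\item Apply the Evertse--Ferretti lemma in the coordinates $y'$ defined by $y'_{i_{t_s}}=L_s$, with the other coordinates unchanged.
\item Each $y'_{i_{t_s}}$ is a combination of $y_{i_j}$ with $c_{i_j}\ge c_{i_{t_s}}$. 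So a monomial of weight $\ge w$ in $y'$ is a combination of monomials of weight $\ge w$ in $y$. This triangular change can only lower Hilbert weights, hence Chow weights.
\end{enumerate}
The result is $e_Y({\bf c})\ge\delta_Y\sum_s c_{i_{t_s}}=\delta_Y\sum_t(\kappa_t-\kappa_{t-1})c_{i_t}$. Combined with your Step 3, this proves the lemma.
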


\section{Meromorphic mappings into $\P^n(\C)$ with arbitrary families of moving hypersurfaces}

In this section, we establish a second main theorem for meromorphic mappings into $\P^n(\C)$ with respect to arbitrary families of moving hypersurfaces, where the truncation levels are explicitly estimated and the mappings are allowed to be algebraically degenerate. In order to prove this result, we first derive a general form of the second main theorem for moving hyperplanes. We begin by recalling the following notions.

For a subset $\Psi\subset \mathcal{M}$, denote by $\mathcal L(\Psi)$ the $\C$-vector space generated by $\Psi$ over $\C$. Assume that $q:=\sharp\Psi<\infty$ and that $1\in\Psi$. For each positive integer $p$, define
$$
\Psi(p)=\{\varphi_1\varphi_2\cdots\varphi_p\mid \varphi_j\in\Psi,\ j=1,\ldots,p\}.
$$
Then
$$
1\in\Psi(p),\quad \Psi(p)\subset \Psi(p+1),\quad \sharp\Psi(p)=\binom{p+q-1}{p}=\binom{p+q-1}{q-1}.
$$

Let $0<\epsilon<1$ be arbitrarily fixed. Then there exists a smallest positive integer, which will always be denoted by $p$ throughout this section, such that
$$
\dfrac{\dim \mathcal L(\Psi(p+1))}{\dim \mathcal L(\Psi(p))}\le 1+\epsilon.
$$

\noindent
{\bf Remark.} In \cite{QT10}, the following estimate was obtained:
\begin{align}\label{4.1}
\dim\mathcal L(\Psi(p+1))\le\binom{p+q-1}{q-1}\le\left\lfloor(1+\epsilon)^{\left\lfloor\frac{q}{\log^2(1+\epsilon)}\right\rfloor+1}\right\rfloor.
\end{align}

\begin{theorem}\label{main}  
Let $f$ be a nonconstant meromorphic map of $\mathbf{C}^m$ into $\P^n(\mathbf{C})$. Let $\mathcal Q=\{Q_i\}_{i=1}^q$ be a family of slowly (with respect to $f$) moving hypersurfaces in $\P^n(\C)$. Let $n_f$ and $\delta_f$ be the algebraic dimension and algebraic degree of $f$ over $\mathcal K_{\mathcal Q}$ respectively.  Let $\Delta_{\mathcal Q,f}$ be the distributive constant of $\mathcal Q$ with respect to $f$ and $d=lcm(\deg Q_1,\ldots,\deg Q_q)$. Then for any $\epsilon >0$, 
$$  \|\ (q-\Delta_f(n_f+1)-\epsilon)T_f(r)\le \sum_{i=1}^{q}\dfrac{1}{d_i}N^{[L-1]}_{(Q_i,f)}(r),$$
where 
$$L=d^{n_f}\delta_f(u+1)^{n_f}\left\lfloor\biggl(1+\frac{\epsilon}{2({n_f}+1)\Delta_{\mathcal Q,f}}\biggl)^{\left\lfloor\frac{d^{n_f}\delta_f(u+1)^{n_f+q}}{\log^2(1+\frac{\epsilon}{2(n_f+1)\Delta_{\mathcal Q,f}})}\right\rfloor+1}\right\rfloor$$
with $u=\lceil 2\Delta_{\mathcal Q,f}(2n_f+1)(n_f+1)d^n\delta_f(\Delta_{\mathcal Q,f}(n_f+1)+\epsilon)\epsilon^{-1}\rceil$. Moreover, if all $Q_i\ (1\le i\le q)$ are fixed hypersurfaces then we may take 
$$L=\left\lfloor d^{n^2_f+n_f}(\delta_f)^{n_f+1}e^{n_f}(2n_f+5)^{n_f}(\Delta^2_{\mathcal Q,f}(n_f+1)\epsilon^{-1}+\Delta_{\mathcal Q,f})^{n_f}\right\rfloor.$$
\end{theorem}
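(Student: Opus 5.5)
We follow the scheme of \cite{Q22c}: Evertse--Ferretti's filtration method combined with a Wronskian (Cartan-type) argument over moving coefficients. The novelty is the explicit truncation level, which comes from bounding the size of the generalized Wronskian.

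\emph{Step 1: reduction to equal degrees.} Replace each $Q_i$ by $Q_i^{d/d_i}$, so that all hypersurfaces have common degree $d$. This changes neither the supports $Q_i(z)^*$ (hence not $\Delta_{\mathcal Q,f}$) nor $\frac{1}{d_i}N_{(Q_i,f)}$, up to truncation bookkeeping. Fix a generic $z$ and the variety $V_z$, which has dimension $n_f$ and degree $\delta_f$.

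\emph{Step 2: maximal subfamilies.} For each point $z$ outside a set where the data degenerate, order the indices so that
$$|Q_{j_1}(\tilde f)(z)|\le\cdots\le|Q_{j_q}(\tilde f)(z)|.$$
Then choose the largest $\ell$ such that $V_z\cap\bigcap_{s<\ell}Q_{j_s}(z)^*\ne\varnothing$. By Lemma~\ref{2.4}, $\|\tilde f\|^d$ is bounded by $c(z)|Q_{j_\ell}(\tilde f)|$ with $c\in\mathcal C_{\mathcal Q}$. Hence
$$\sum_i\log\frac{\|\tilde f\|^d}{|Q_i(\tilde f)|}\le\sum_{s\le\ell}\log\frac{\|\tilde f\|^d}{|Q_{j_s}(\tilde f)|}+O(\log^+c).$$

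\emph{Step 3: embedding and Chow weights.} Consider the map $F=(Q_1(\tilde f):\cdots:Q_q(\tilde f))$, whose image lies in $Y_z=\Phi_z(V_z)\subset\P^{q-1}$. Here $\Phi_z$ is the morphism given by the $Q_i(z)$, and $\deg Y_z\le d^{n_f}\delta_f$. Take weights $c_i=\log\frac{\|\tilde f\|^d|Q_{j_\ell}|}{|Q_i(\tilde f)|}$, truncated to be nonnegative. Lemma~\ref{2.2}, applied to the coordinate hyperplanes $H_{j_1},\ldots,H_{j_\ell}$, gives $e_{Y}({\bf c})\ge\frac{\deg Y}{\Delta_{\mathcal Q,f}}\sum_{s\le\ell}c_{j_s}$; note that the distributive constant of $\{H_{j_s}\}$ with respect to $Y$ is at most $\Delta_{\mathcal Q,f}$. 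Theorem~\ref{2.1} with $u$ as in the statement then yields a basis of monomials ${\bf y}^{{\bf a}_i}$ of $\C[y]_u/(I_Y)_u$ with
$$\frac{1}{uH_Y(u)}\sum_i{\bf a}_i\cdot{\bf c}\ge\frac{1}{(n_f+1)\Delta_{\mathcal Q,f}}\sum_{s\le\ell}c_{j_s}-\frac{\epsilon}{2\Delta_{\mathcal Q,f}(n_f+1)}\max c.$$
The choice $u\approx 2\Delta(2n_f+1)(n_f+1)d^{n_f}\delta_f(\cdot)\epsilon^{-1}$ makes the error at most $\epsilon/2$ after multiplying through. The main difficulty is that $Y_z$ and the chosen basis depend on $z$, and the coefficients lie in $\mathcal K_{\mathcal Q}$, not $\C$. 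To handle this, pass to the finitely many possible bases and orderings. Then take the functions $\Psi=\{1\}\cup\{$coefficient ratios$\}$, and follow the Dethloff--Tan/Quang--Tan construction. Multiply the basis elements of $\C[y]_u/(I_Y)_u$ by the products in $\Psi(p)$, where $p$ is the smallest integer with $\dim\mathcal L(\Psi(p+1))\le(1+\epsilon')\dim\mathcal L(\Psi(p))$ and $\epsilon'=\frac{\epsilon}{2(n_f+1)\Delta}$. This gives meromorphic functions $h_1,\dots,h_M$ that are $\C$-linearly independent after removing relations.

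\emph{Step 4: Wronskian and truncation.} Apply the generalized Wronskian $W$ of Proposition~\ref{2.2} to $h_1,\ldots,h_M$. By the lemma on logarithmic derivatives, $\int\log\frac{\prod|h_{\cdot}|}{|W|}\sigma_m=o(T_f)$. This converts the inequality of Step 3, summed over $z$, into
$$(q-\Delta(n_f+1)-\epsilon)T_f\le\sum\frac1d N_{Q_i(\tilde f)}-\frac{1}{\text{const}}N_W.$$
The standard local computation then shows that the divisor of $\prod Q_i(\tilde f)^{\cdot}$ minus $\nu_W$ is bounded by its truncation at level $M-1$, since derivatives of order at most $M-1$ occur. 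Therefore one may take $L=M\le H_Y(u)\cdot\dim\mathcal L(\Psi(p+1))$. We have $H_Y(u)\le d^{n_f}\delta_f(u+1)^{n_f}$, and estimate~\eqref{4.1} with $\sharp\Psi\le d^{n_f}\delta_f(u+1)^{n_f+q}$ produces the stated $L$.

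In the fixed-hypersurface case $\Psi=\{1\}$, so $L=H_Y(u)$. The bound $H_Y(u)\le\delta\binom{u+n_f}{n_f}\le\delta(e(u+n_f)/n_f)^{n_f}$, with the explicit $u$, gives the second formula.

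I expect the main obstacle to be Step 3: making the Evertse--Ferretti and Chow weight bounds uniform in $z$ when the variety and basis move. This requires the auxiliary $\Psi(p)$-space and careful control of linear independence over $\C$ versus over $\mathcal K_{\mathcal Q}$.
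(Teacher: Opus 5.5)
Your architecture matches the paper's. You reduce to a common degree $d$, and you use orderings with the minimal $\ell$ for which $V_z\cap\bigcap_{j\le\ell}Q_{\sigma(j)}(z)^*=\varnothing$. You use the finite map $\Phi_z$ onto $Y_z$ with $\deg Y_z\le d^{n_f}\delta_f$, the Evertse--Ferretti inequality combined with the Chow-weight lemma, the auxiliary space $\mathcal L(\Psi(p))$, and a Wronskian second main theorem for the functions $b_kv_j(\tilde\Phi\circ\tilde f)$. The uniformity in $z$ that you flag as the main obstacle is handled there by a genericity claim imported from Quang's earlier work, essentially as you indicate.

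The real gap is in Step~4, which is exactly where the content specific to this theorem, the explicit truncation, lives. Derivative counting only gives $\nu_W(z_0)\ge\sum_k\max\{0,\nu_{g_k}(z_0)-(M-1)\}$ for $\C$-linearly independent combinations $g_k$ of the $h$'s. The combinations you can actually use are $b\cdot\prod_jQ_j(\tilde f)^{a_j}$, with $b\in\mathcal L(\Psi(p))$ and ${\bf y}^{\bf a}$ running over monomials independent modulo $(I_Y)_u$. So this truncates divisors of \emph{monomials} in the $Q_j(\tilde f)$. It does not tell you which monomials to take, and it does not produce the exponent you leave blank in ``$\prod Q_i(\tilde f)^{\cdot}$''. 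What is needed is the pointwise bound
\[
\nu_W(z_0)\ \ge\ \frac{s\,u\,H_Y(u)}{n_f+1}\Bigl(\frac{1}{\Delta_{\mathcal Q,f}}-\frac{(2n_f+1)(n_f+1)\delta}{u}\Bigr)\sum_{j=1}^q\max\{0,\nu_{Q_j(\tilde f)}(z_0)-(M-1)\}-O(\nu_R(z_0)),
\]
with $s=\dim\mathcal L(\Psi(p))$ and $R$ a small function. This constant is what matches the coefficient of $N_W(r)$ once the integrated inequality is divided by the coefficient of $\sum_i m_f(r,Q_i)$ and the first main theorem is applied.

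The paper obtains this bound by running the Step~3 machinery a second time, now on multiplicities:
\begin{itemize}
\item Take the integer weights $c_j=\max\{0,\nu_{Q_j(\tilde f)}(z_0)-(M-1)\}$.
\item Choose monomials ${\bf y}^{{\bf a}_1},\ldots,{\bf y}^{{\bf a}_{H_Y(u)}}$ realizing $S_{Y_{z_0}}(u,{\bf c})$. They are independent modulo $(I_Y)_u$ over $\mathcal K_{\mathcal Q}$ by the genericity claim.
\item Use $\max\{0,\sum_ja_{i,j}\nu_{Q_j(\tilde f)}(z_0)-(M-1)\}\ge{\bf a}_i\cdot{\bf c}$, valid since $\sum_ja_{i,j}=u\ge1$, to get $\nu_W(z_0)\ge s\,S_{Y_{z_0}}(u,{\bf c})$.
\item Apply the Evertse--Ferretti inequality and the Chow-weight lemma to this ${\bf c}$, ordering indices by decreasing vanishing order.
\item The indices beyond the corresponding $\ell$ contribute at most $q\,\nu_R(z_0)$. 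Here $R$ comes from the fact that $\{Q_{\sigma(j)}\}_{j\le\ell}$ have no common zero on $V_z$, and $N_R(r)=o(T_f(r))$.
\end{itemize}
Without this second application of the Hilbert/Chow weight estimates, the truncation cannot be transferred from the Wronskian system to the $Q_i$.

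There are two smaller corrections. First, the lemma on logarithmic derivatives gives $\int_{\|z\|=r}\log^+\frac{|W|}{\prod_{g\in\mathcal J}|g|}\,\sigma_m=o(T_f(r))$ for each of the finitely many independent systems $\mathcal J$. It does not give $\int_{\|z\|=r}\log\frac{\prod|h|}{|W|}\,\sigma_m=o(T_f(r))$, which is false in general. The former, combined with Jensen's formula for $\log\frac{\|\tilde F\|^M}{|W|}$, is what produces the term $-N_W(r)$.

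Second, in the fixed case you cannot keep the $u$ of the statement. Since $t=s=1$ there, no part of $\epsilon$ is spent on the factor $t/s$, so the paper takes $u'$ without the leading factor $2$. With the statement's $u$, your bound $\delta\bigl(e(u+n_f)/n_f\bigr)^{n_f}$ gives a constant of order $(4n_f+6)^{n_f}$ instead of $(2n_f+5)^{n_f}$.
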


\begin{proof}
Let $\tilde f=(f_0,\ldots,f_n)$ be a reduced representation of $f$, and let $I(f(\C^m))$ denote the ideal of the ring $\mathcal K_{\mathcal Q}[x_0,\ldots,x_n]$ consisting of all polynomials $P\in \mathcal K_{\mathcal Q}[x_0,\ldots,x_n]$ (including the zero polynomial) such that
$$P(\tilde f)(z)\equiv 0.$$
Let $\{R_1,\ldots,R_\lambda\}$ be a minimal generating set of the ideal $I(f(\C^m))$.

For every point $z$ which is not a pole of any $R_i\ (1\le i\le \lambda)$, define $V_z=\bigcap_{i=1}^\lambda R_i(z)^*.$
Then, for generic points $z$, one has
$$\dim V_z=n_f,\qquad \deg V_z=\delta_f.$$

Replacing each $Q_j$ by $Q_j^{\frac d{\deg Q_j}}$ if necessary, we may assume that
$$\deg Q_j=d,\qquad 1\le j\le q.$$
Let $\sigma_1,\ldots,\sigma_{n_0}$ be all bijections from $\{1,\ldots,q\}$ onto itself. Then, for each $\sigma_i\ (1\le i\le n_0)$, there exists a smallest index $\ell_i\le q$ such that
$$\bigcap_{j=1}^{\ell_i}Q_{\sigma_i(j)}(z)^*\cap V_z=\emptyset$$
for generic points $z$.
Denote by $\mathcal S$ the set of all points $z\in\C^m$ satisfying at least one of the following conditions:
\begin{itemize}
\item $z$ is a pole of some coefficient of $R_i$ or $Q_j$;
\item $\bigcap_{j=1}^{\ell_i}Q_{\sigma_i(j)}(z)^*\cap V_z\ne\emptyset$ for some $i$;
\item $Q_j(\tilde f)(z)=0$ for some $j$.
\end{itemize}
Then $\mathcal S$ is a proper analytic subset of $\C^m$.

For each $i\ (1\le i\le \sigma_i)$, let $S(i)$ be the set of all points $z\in\C^m\setminus\mathcal S$ such that
$$
\frac{|Q_{\sigma_i(1)}(\tilde f)(z)|}{\|Q_{\sigma_i(1)}(z)\|}
\le
\frac{|Q_{\sigma_i(2)}(\tilde f)(z)|}{\|Q_{\sigma_i(2)}(z)\|}
\le\cdots\le
\frac{|Q_{\sigma_i(q)}(\tilde f)(z)|}{\|Q_{\sigma_i(q)}(z)\|}.
$$
Then, for every $z\in S(i)$,
$$
\prod_{j=1}^q
\frac{\|\tilde f(z)\|^d\|Q_j(z)\|}{|Q_j(\tilde f)(z)|}
\le
C(z)\prod_{j=1}^{\ell_i}
\frac{\|\tilde f(z)\|^d\|Q_{\sigma_i(j)}(z)\|}{|Q_{\sigma_i(j)}(\tilde f)(z)|},
$$
where $C\in\mathcal C_{\mathcal Q}$.

For each $z\notin\mathcal S$, consider the mapping $\Phi_z:V_z\longrightarrow \P^{q-1}(\C)$ defined by
$$
\Phi_z(x)=\bigl(Q_1(z)(\tilde x):\cdots:Q_q(z)(\tilde x)\bigr)
$$
for every $x=(x_0:\cdots:x_n)\in V_z$ and $\tilde x=(x_0,\ldots,x_n).$
We also set
$$
\tilde\Phi_z(x)=\bigl(Q_1(z)(x),\ldots,Q_q(z)(x)\bigr).
$$
Let $Y_z=\Phi_z(V_z).$ Since $\bigcap_{j=1}^q(Q_j(z)^*\cap V_z)=\emptyset,$
the map $\Phi_z$ is a finite morphism on $V_z$, and $Y_z$ is a projective subvariety of $\P^{q-1}(\C)$ satisfying
$$\dim Y_z=n_f\quad\text{ and }\quad\delta:=\deg Y_z\le d^{n_f}\deg V_z=d^{n_f}\delta_f$$
for generic points $z$.

For every ${\bf a}=(a_1,\ldots,a_q)\in\mathbb Z_{\ge0}^q$ and ${\bf y}=(y_1,\ldots,y_q),$
we write
$${\bf y}^{\bf a}=y_1^{a_1}\cdots y_q^{a_q}.$$

Let $u$ be a positive integer and set $\xi_u=\binom{q+u}{u}.$
Define the $\C$-vector space
$$Y_{z,u}:=\C[y_1,\ldots,y_q]_u/(I_{Y_z})_u.$$
Denote by $(I_Y)_u$ the subspace of the $\mathcal K_{\mathcal Q}$-vector space $\mathcal K_{\mathcal Q}[y_1,\ldots,y_q]_u$
consisting of all homogeneous polynomials $P\in\mathcal K_{\mathcal Q}[y_1,\ldots,y_q]_u$ (including the zero polynomial) such that
$$P(z)(\Phi_z(\tilde f(z)))\equiv 0.$$

Let $(\tilde R_1,\ldots,\tilde R_p)$ be a $\mathcal K_{\mathcal Q}$-basis of $(I_Y)_u$. Enlarging $\mathcal S$ if necessary, we may assume that all zeros and poles of every nonzero coefficient of $\tilde R_i\ (1\le i\le p)$ are contained in $\mathcal S$. Moreover, all of the above assertions stated for generic points $z$ remain valid for every $z\notin\mathcal S$.

Choose $\xi_u-p$ nonzero monic monomials $v_1,\ldots,v_{\xi_u-p}$ of degree $u$ in the variables $y_1,\ldots,y_q$ such that $\{\tilde R_1,\ldots,\tilde R_p,v_1,\ldots,v_{\xi_u-p}\}$ forms a $\mathcal K_{\mathcal Q}$-basis of $\mathcal K_{\mathcal Q}[y_1,\ldots,y_q]_u.$ Finally, denote by $\mathcal T=\{T_1,\ldots,T_{\xi_u}\}$ the set of all nonzero monic monomials of degree $u$ in the variables $y_1,\ldots,y_q$. Then $\{T_1,\ldots,T_{\xi_u}\}$ forms a $\mathcal K_{\mathcal Q}$-basis of $\mathcal K_{\mathcal Q}[y_1,\ldots,y_q]_u,$ and also a $\C$-basis of $\C[y_1,\ldots,y_q]_u.$

From \cite[Claim 4.3]{Q22c}, we have the following claim.
\begin{claim}\label{3.1}
There is an analytic proper subset $\mathcal S'$ of $\C^m$ such that for all $z\not\in\mathcal S'$:
\begin{itemize}
\item[(i)] the family of equivalent classes of $v_1,\ldots,v_{\xi_u-p}$ is a basis of $Y_{z,u}$ and the family $\{\tilde R_1(z),\ldots,\tilde R_p(z)\}$ is a $\C$-basis of $(I_{Y_z})_u$;
\item[(ii)] for a subset $\{v_1',\ldots,v'_{\xi_u-p}\}$ of $\mathcal T$, if $\{\tilde R_1,\ldots,\tilde R_p,v_1',\ldots,v'_{\xi_u-p}\}$ is a $\mathcal K_{\mathcal Q}$-basis of $\mathcal K_{\mathcal Q}[y_1,\ldots,y_q]_u$ then the set of equivalent classes of $v'_1,\ldots,v'_{\xi_u-p}$ modulo $(I_{Y_z})_u$ is a $\C$-basis of $Y_{z,u}$ for everey $z\not\in\mathcal S$;
\item[(iii)] otherwise if $\{\tilde R_1,\ldots,\tilde R_p,v_1',\ldots,v'_{\xi_u-p}\}$ is linearly dependent over $\mathcal K_{\mathcal Q}$ then the set of equivalent classes of $v_1',\ldots,v'_{\xi_u-p}$ modulo $(I_{Y_z})_u$ is not a $\C$-basis of $Y_{z,u}$.
\end{itemize}
\end{claim}
\noindent
Then, $\xi_u-p=H_{Y_z}(u)$ for all $z$ outside $\mathcal S\cup\mathcal S'.$ 

Consider the holomorphic map $F$ from $\C^m$ into $\P^{\xi_u-p-1}(\C)$ with the representation
$$ \tilde F=(v_1(\tilde\Phi\circ \tilde f),\ldots,v_{\xi_u-p}(\tilde\Phi\circ \tilde f)). $$
Since $f$ is algebraically nondegenerate over $\mathcal K_{\mathcal Q}$, $F$ is linearly nondegenerate over $\mathcal K_{\mathcal Q}$.

For each $z\not\in\mathcal S\cup\mathcal S'$, we set ${\bf c}_z = (c_{1,z},\ldots,c_{q,z})\in\mathbb Z^{q},$ where
\begin{align*}
c_{i,z}:=\log\frac{\|\tilde f(z)\|^d\|Q_i(z)\|}{|Q_i(\tilde f)(z)|}\ge 0, \text{ for } i=1,\ldots,q.
\end{align*}
By the definition of the Hilbert weight, there are ${\bf a}_{1,z},\ldots,{\bf a}_{\xi_u-p,z}\in\mathbb N^{q}$ with
$$ {\bf a}_{i,z}=(a_{i,1,z},\ldots,a_{i,q,z}), $$
where $a_{i,j,z}\in\{1,\ldots,\xi_u\},$ such that the residue classes modulo $(I_Y)_u$ of ${\bf y}^{{\bf a}_{1,z}},\ldots,{\bf y}^{{\bf a}_{\xi_u-p,z}}$ form a basic of $\C[y_1,\ldots,y_q]_u/(I_{Y_z})_u$ and
\begin{align*}
S_Y(u,{\bf c}_z)=\sum_{i=1}^{\xi_u-p}{\bf a}_{i,z}\cdot{\bf c}_z.
\end{align*}
Since ${\bf y}^{{\bf a}_{i,z}}\in\mathcal T$ and $\{\tilde R_1,\ldots,\tilde R_p,{\bf y}^{{\bf a}_{1,z}},\ldots,{\bf y}^{{\bf a}_{\xi_u-p,z}}\}$ is a basis of $\mathcal K_{\mathcal Q}[y_1,\ldots,y_q]$ (by Claim \ref{3.1}(iii)), the set of equivalent classes of $\{{\bf y}^{{\bf a}_{1,z}},\ldots,{\bf y}^{{\bf a}_{\xi_u-p,z}}\}$ is a basis of $\dfrac{\mathcal K_{\mathcal Q}[y_1,\ldots,y_q]_u}{I(Y)_u}$. Then $ {\bf y}^{{\bf a}_{i,z}}=L_{i,z}(v_1,\ldots,v_{H_Y(u)})\ \text{modulo }I(Y)_u, $ where $L_{i,z}\ (1\le i\le \xi_u-p)$ are $\mathcal K_{\mathcal Q}$-independent linear forms with coefficients in $\mathcal K_{\mathcal Q}$.
Therefore,
\begin{align*}
\log\prod_{i=1}^{\xi_u-p} |L_{i,z}(\tilde F(z))|&=\log\prod_{i=1}^{\xi_u-p}\prod_{j=1}^q|Q_j(\tilde f)(z)|^{a_{i,j,z}}\\
&= -S_Y(u,{\bf c}_z)+du(\xi_u-p)\log \|\tilde f(z)\| +\log C_1(z),
\end{align*}
where $C_1\in\mathcal C_{\mathcal Q}$. Note that the number of such linear forms $L_{i,z}$ is finite, and is at most $\xi_u$. Denote by $\mathcal L$ the collection of all linear forms $L_{i,z}$ appearing in the above inequalities. Then the above inequalities imply that
\begin{align*}
\log\prod_{i=1}^{\xi_u-p}\dfrac{\|\tilde F(z)\|\cdot \|L_{i,z}\|}{|L_{i,z}(\tilde F(z))|}= &S_Y(u,{\bf c}_z)-du(\xi_u-p)\log \|\tilde f(z)\| \\
&+(\xi_u-p)\log \|\tilde F(z)\|+\log C_2,
\end{align*}
where $C_2(z)\in\mathcal C_{\mathcal Q}$. Then, we have
\begin{align}\label{3.2}
\begin{split}
S_Y(u,{\bf c}_z)\le&\max_{\mathcal J\subset\mathcal L}\log\prod_{L\in \mathcal J}\dfrac{\|\tilde F(z)\|\cdot \|L\|}{|L(\tilde f(z))|}+du(\xi_u-p)\log \|\tilde f(z)\|\\
& -(\xi_u-p)\log \|\tilde F(z)\|+\log C_2(z),
\end{split}
\end{align}
where the maximum is taken over all subsets $\mathcal J\subset\mathcal L$ with $\sharp\mathcal J=\xi_u-p$ and $\{L|L\in\mathcal J\}$ is linearly independent over $\mathcal K_{\mathcal Q}$.
From Theorem \ref{2.1} we have
\begin{align}\label{3.3}
\dfrac{1}{u(\xi_u-p)}S_{Y_z}(u,{\bf c}_z)\ge\frac{1}{(n_f+1)\delta}e_{Y_z}({\bf c}_z)-\frac{(2n_f+1)\delta}{u}\max_{1\le i\le q}c_{i,z}
\end{align}
Combining (\ref{3.2}) and (\ref{3.3}), we get
\begin{align}\label{3.4}
\begin{split}
\frac{1}{(n_f+1)\delta}e_{Y_z}({\bf c}_z)
&\le\dfrac{1}{u(\xi_u-p)}\max_{\mathcal J\subset\mathcal L}\log\prod_{L\in \mathcal J}\dfrac{\|\tilde F(z)\|\cdot \|L\|}{|L(\tilde F(z))|}\\
&\ \ \ \ +\frac{(2n_f+1)\delta}{u}\max_{1\le i\le q}c_{i,z}+\log^+C_3(z)\\
&\le\dfrac{1}{u(\xi_u-p)}\max_{\mathcal J\subset\mathcal L}\prod_{L\in\mathcal J}\dfrac{\|\tilde F(z)\|\cdot \|L\|}{|L(\tilde F(z))|}\\
&\ \ \ \ +\frac{(2n_f+1)\delta}{u}\sum_{1\le i\le q}\log\frac{\|\tilde f(z)\|^d\|Q_i(z)\|}{|Q_i(\tilde f)(z)|}+\log^+C_3(z),
\end{split}
\end{align}
for generic points $z$, where $C_3(z)\in\mathcal C_{\mathcal Q}$.

Fix a point $z\not\in\mathcal S\cup \mathcal S'$. Choose $i\in\{1,\ldots,n_0\}$ such that
$$ e_{\sigma_i(1),z}\le e_{\sigma_i(2),z}\le\cdots\le e_{\sigma_i(q),z}.$$
Since $\bigcap_{j=1}^{\ell_i-1}Q_{\sigma_i(j)}(z)^*\cap V_z\ne\emptyset$ for generic points $z$, by Lemma \ref{2.2}, we have
\begin{align}\label{3.5}
\begin{split}
\Delta_{\mathcal Q,f}e_{Y_z}({\bf c}_z)&\ge (c_{\sigma_i(0),z}+\cdots +c_{\sigma_i(\ell_i),z})\cdot\delta\\
&=\delta\biggl(\sum_{j=1}^{\ell_i}\log\frac{\|\tilde f(z)\|^d\|Q_{\sigma_i(j)}(z)\|}{|Q_{\sigma_i(j)}(\tilde f)(z)|}\biggl ).
\end{split}
\end{align}
Then, from (\ref{3.2}), (\ref{3.4}) and (\ref{3.5}) we have
\begin{align}\label{3.6}
\begin{split}
\frac{1}{\Delta_{\mathcal Q,f}}\log \prod_{i=1}^q\dfrac{\|\tilde f (z)\|^d\|Q_i(z)\|}{|Q_i(\tilde f)(z)|}
&\le\dfrac{n_f+1}{u(\xi_u-p)}\max_{\mathcal J\subset\mathcal L}\log\prod_{L\in\mathcal J}\dfrac{\|\tilde F(z)\|\cdot \|L\|}{|L(\tilde F(z))|}\\
&+\frac{(2n_f+1)(n_f+1)\delta}{u}\sum_{1\le i\le q}\log\frac{\|\tilde f(z)\|^d\|Q_{i}(z)\|}{|Q_{i}(\tilde f)(z)|}\\
&+\frac{1}{\Delta_{\mathcal Q,f}}\log C(z)+(n_f+1)\log^+C_3(z)
\end{split}
\end{align}
for generic points $z$.

Denote by $\Psi$ the set of all coefficients of the linear forms $L_{i,z}$, and write $\Psi=\{a_1,\ldots,a_{q_0}\}.$
Then $\sharp\mathcal L\le \xi_u$ and $\sharp\Psi=q_0\le \xi_u(\xi_u-p).$
For each positive integer $m$, let $\mathcal L(\Psi(m))$ denote the $\C$-vector space generated by the set
$$\left\{a_1^{i_1}\cdots a_{q_0}^{i_{q_0}}\,\middle|\,i_j\ge 0,\ \sum_{j=1}^{q_0}i_j\le m\right\}.$$
By Remark 3.4 in \cite{QT10}, there exists a smallest positive integer $p'$ such that
$$ \frac{\dim\mathcal L(\Psi (p'+1))}{\dim\mathcal L(\Psi (p'))}\le 1+\frac{\epsilon}{2\Delta_{\mathcal Q,f}(n+1)}.$$
Put $s=\dim\mathcal L(\Psi (p')), t=\dim\mathcal L(\Psi (p'+1))$. Again, by  \cite[Remark 3.4]{QT10}, we have
\begin{align*}
t&\le \left\lfloor\left(1+\frac{\epsilon}{2(n_f+1)\Delta_{\mathcal Q,f}}\right)^{\left\lfloor\frac{\sharp\Psi}{\log^2(1+\frac{\epsilon}{2(n_f+1)\Delta_{\mathcal Q,f}})}\right\rfloor+1}\right\rfloor\\
&\le \left\lfloor\left(1+\frac{\epsilon}{2(n_f+1)\Delta_{\mathcal Q,f}}\right)^{\left\lfloor\frac{d^{n_f}\delta_f(u+1)^{n_f+q}}{\log^2(1+\frac{\epsilon}{2(n_f+1)\Delta_{\mathcal Q,f}})}\right\rfloor+1}\right\rfloor.
\end{align*}
Here, the last inequality comes from the fact that $\xi_u\le (u+1)^q$ and 
$$\xi_u-p\le\delta\binom{n_f+u}{n_f}\le d^{n_f}\delta_f\binom{n_f+u}{n_f}.$$

Choose a $\C$-basis $\{b_1,\ldots,b_s\}$ of $\mathcal L(\Psi(p'))$, and extend it to a $\C$-basis $\{b_1,\ldots,b_t\}$ of $\mathcal L(\Psi(p'+1))$.
Consider the meromorphic mapping $\tilde F:\C^m\longrightarrow \mathbb P^{t(\xi_u-p)-1}(\C)$ with a reduced representation
$$\tilde F=(b_1v_1(\tilde\Phi\circ \tilde f),\ldots,b_1v_{\xi_u-p}(\tilde\Phi\circ \tilde f),\ldots, b_tv_1(\tilde\Phi\circ \tilde f),\ldots,b_tv_{\xi_u-p}(\tilde\Phi\circ \tilde f)).$$
Note that $\|\ T_{\tilde F}(r)=duT_f(r)+o(T_f(r))$. By the general form of the second main theorem for fixed hyperplanes, we have
\begin{align}\label{3.7}
\begin{split}
\biggl \|\ s\int_0^{2\pi}\max_{\mathcal J\subset\mathcal L}\log\prod_{L\in\mathcal J}\frac{\|\tilde F\|}{|L(\tilde F)|}\frac{d\theta}{2\pi}-N_{W(\tilde F)}(r)&\le t(\xi_u-p)T_{\tilde F}(r)+o(T_{\tilde F}(r))\\
&=t(\xi_u-p)udT_f(r)+o(T_f(r))
\end{split}
\end{align}
where $\max_{\mathcal J\subset\mathcal L}$ is taken over all subsets $\mathcal J$ of the system $\mathcal L$ of linear forms such that $\mathcal J$ is linearly independent over $\mathcal {K}_{\mathcal {Q}}$, $\epsilon'$ is an arbitrary positive number. Integrating (\ref{3.6}) and using (\ref{3.7}), we obtain
\begin{align*}
\biggl\|\ &\left(\frac{1}{\Delta_{\mathcal Q,f}}-\frac{(2n_f+1)(n_f+1)\delta}{u}\right)\sum_{i=1}^qm_f(r,Q_i)\\
&\le\frac{d(n_f+1)t}{s}T_f(r)-\frac{(n_f+1)}{u(\xi_u-p)s}N_{W(\tilde F)}(r)+o(T_f(r)).
\end{align*}

We now estimate the quantity $N_{W(\tilde F)}(r)$. Let $z\in\C^m\setminus\{\mathcal S\cup\mathcal S'\}$ which is neither zero nor pole of any coefficients of $Q_i\ (1\le i\le q)$ and $b_i\ (1\le i\le t)$. We set 
$$c_{i}=\max\{0,\nu^0_{Q_i(\tilde f)}(z)-\xi_u+p+1\}\ (1\le i\le q) \text{ and } {\bf c}=(c_{1},\ldots,c_{q})\in\mathbb Z^q_{\ge 0}.$$
Then there are 
$${\bf a}_i=(a_{i,1},\ldots,a_{i,q}),a_{i,s}\in\{1,...,u\}$$
such that ${\bf y}^{{\bf a}_1},...,{\bf y}^{{\bf a}_{H_Y(u)}}$ is a basic of $\C[y_1,\ldots,y_q]_u/(I_{Y_z})_u$ and
$$ S_{Y_z}(u,{\bf c})=\sum_{i=1}^{H_Y(u)}{\bf a}_i\cdot{\bf c}.$$
Similarly as above, we write ${\bf y}^{{\bf a}_i}=L_i(v_1,...,v_{\xi_u-p})$, where $L_1,...,L_{\xi_u-p}$ are linearly independent linear forms. We see that 
$$ \nu^0_{W(\tilde F)}(z)\ge t\sum_{i=1}^{H_Y(u)}\max\{0,\nu^0_{L_i(\tilde F)}(z)-n_u\},$$
where $n_u=(\xi_u-p)t-1$. It is easy to see that
$$ \nu^0_{L_i(\tilde F)}(z)=\sum_{j=1}^qa_{i,j}\nu^0_{Q_j(\tilde f)}(z),$$
and hence
$$ \max\{0,\nu^0_{L_i(\tilde F)}(z)-n_u\}\ge\sum_{j=1}^qa_{i,j}c_{j}={{\bf a}_i}\cdot{\bf c}. $$
Thus, 
\begin{align}\label{3.8}
\nu^0_{W(\tilde F)}(z)\ge t\sum_{i=1}^{H_Y(u)}{{\bf a}_i}\cdot{\bf c}=tS_Y(u,{\bf c}).
\end{align}
Choose an index $\sigma_{i_0}$ such that 
$$\nu^0_{Q_{\sigma_{i_0}(1)}(\tilde f)}(z)\ge \nu^0_{Q_{\sigma_{i_0}(2)}(\tilde f)}(z)\ge\cdots\ge \nu^0_{Q_{\sigma_{i_0}(q)}(\tilde f)}(z).$$
By Lemma \ref{2.2}, we have
\begin{align*}
\Delta_{\mathcal Q,f}e_{Y_z}({\bf c})&\ge (c_{\sigma_{i_0}(1),z}+\cdots +c_{\sigma_{i_0}(\ell_{i_0}),z})\cdot\delta\\
&=\delta\cdot\sum_{j=1}^{\ell_{i_0}}\max\{0,\nu^0_{Q_{\sigma_{i_0}(j)}(\tilde f)}(z)-n_u\}\\
&=\delta\cdot\sum_{j=1}^{q}\max\{0,\nu^0_{Q_j(\tilde f)}(z)-n_u\}+O(\nu_{R^{i_0}}(z)),
\end{align*}
where $R^{i}$ is the resultant of the family $\{Q_{\sigma_i(j)}\}_{j=1}^{\ell_i}$ for $i=1,\ldots,q$. 

On the other hand, by Theorem \ref{2.1} we have that 
\begin{align*}
 S_{Y_z}(u,{\bf c}) &\ge\frac{u(\xi_u-p)}{(n_f+1)\delta}e_Y({\bf c})-(2n_f+1)\delta(\xi_u-p)\max_{1\le i\le q}c_{i}+O(\nu_{R^{i_0}}(z))\\
&\ge\left(\frac{u(\xi_u-p)}{\Delta_{\mathcal Q,f}(n_f+1)}-(2n_f+1)\delta(\xi_u-p)\right)\\
&\ \ \times \sum_{j=1}^{q}\max\{0,\nu^0_{Q_j(\tilde f)}(z)-n_u\}+O(\nu_{R^{i_0}}(z)).
\end{align*}
Combining this inequality and (\ref{3.8}), we have
\begin{align*}
\dfrac{(n_f+1)}{u(\xi_u-p)s}\nu^0_{W(\tilde F)}(z)&\ge\dfrac{t}{us}\left(\frac{u}{\Delta_{\mathcal Q,f}}-(2n_f+1)(n_f+1)\delta_f\right )\\
&\times\sum_{j=1}^{q}\max\{0,\nu^0_{Q_j(\tilde f)}(z)-n_u\}+O(\sum_{i=1}^{n_0}\nu_{R^{i}}(z)).
\end{align*}
Integrating both sides of this inequality, we obtain 
\begin{align*}
\biggl \|\ \dfrac{(n_f+1)}{u(\xi_u-p)s}N_{W(\tilde F)}(r)&\ge \dfrac{t}{s}\left(\frac{1}{\Delta_{\mathcal Q,f}}-\frac{(2n_f+1)(n_f+1)\delta}{u}\right )\\
&\ \ \times\sum_{j=1}^{q}\left(N_{(Q_j,f)}(r)-N^{[n_u]}_{(Q_j,f)}(r)\right)+o(T_f(r)).
\end{align*}
Seting $m_0=\frac{1}{\Delta_{\mathcal Q,f}}-\frac{(2n_f+1)(n_f+1)\delta}{u}$ and combining inequalities (\ref{3.7}) with the above inequality, we get
\begin{align*}
\biggl\|\ \sum_{i=1}^{q}m_f(r,Q_i)\le \frac{d(n_f+1)t}{sm_0}T_f(r)-\frac{t}{s}\sum_{j=1}^{q}\left(N_{(Q_i,f)}(r)-N^{[n_u]}_{(Q_j,f)}(r)\right)+o(T_f(r)).
\end{align*}
This inequality implies that
\begin{align}\label{3.9}
\biggl\|\ \left(q-\frac{(n_f+1)t}{sm_0}\right)T_f(r)\le\sum_{j=1}^{q}\frac{1}{d}N^{[n_u]}_{(Q_j,f)}(r)+o(T_f(r)).
\end{align}

\noindent
We choose $u=\lceil 2\Delta_{\mathcal Q,f}(2n_f+1)(n_f+1)d^{n_f}\delta_f(\Delta_{\mathcal Q,f}(n_f+1)+\epsilon)\epsilon^{-1}\rceil$. 
Then 
$$u\ge \biggl\lceil \dfrac{\Delta_{\mathcal Q,f}(2n_f+1)(n_f+1)\delta(\Delta_{\mathcal Q,f}(n_f+1)+\epsilon)}{\Delta_{\mathcal Q,f}(n_f+1)+\epsilon-\frac{\Delta_{\mathcal Q,f}(n_f+1)t}{s}}\biggl\rceil,$$ 
and we have:
\begin{align*}
\bullet\ &q-\frac{(n_f+1)t}{sm_0}\ge q-\frac{\Delta_{\mathcal Q,f}(n_f+1)t}{(1-\Delta_{\mathcal Q,f}(2n_f+1)(n_f+1)\delta/u)s}\\
&\ge q-\Delta_{\mathcal Q,f}(n_f+1)-\epsilon;\\
\bullet\ &n_u+1=(\xi_u-p)t\\
&\le  d^{n_f}\delta_f(u+1)^{n_f}\left\lfloor\bigl(1+\frac{\epsilon}{2(n_f+1)\Delta_{\mathcal Q,f}}\bigl)^{\bigl[\frac{d^{n_f}\delta_f(u+1)^{n_f+q}}{\log^2(1+\frac{\epsilon}{2(n_f+1)\Delta_{\mathcal Q,f}})}\bigl]+1}\right\rfloor=L;\\
\bullet\ &\frac{(t(\xi_u-p)-1)t(n_f+1)}{2sdm_0u}< \frac{t(n_f+1)}{sm_0}\cdot\frac{(L-1)}{2du}.
\end{align*}
Then, from (\ref{3.9}) we have
\begin{align*}
\bigl\|\ (q-\Delta_{\mathcal Q,f}(n_f+1)-\epsilon)T_f(r)\le\sum_{j=1}^{q}\frac{1}{d}N^{[L-1]}_{(Q_j,f)}(r)+o(T_f (r)).
\end{align*}
Then, we get the desired inequality.

If all $Q_i$ are fixed hypersurfaces then $t=s=1$. By choosing 
$$u'=\lceil \Delta_{\mathcal Q,f}(2n_f+1)(n_f+1)d^{n_f}\delta_f(\Delta_{\mathcal Q,f}(n_f+1)+\epsilon)\epsilon^{-1}\rceil$$ 
and replacing $u$ in the above by $u'$, we have
\begin{align*}
n_{u'}+1&=\xi_{u'}-p<\delta\binom{n_f+u'}{n_f}\le d^{n_f}\delta_f\binom{n_f+u'}{n_f}\\
&\le \left\lfloor d^{n_f}\delta_fe^{n_f}\left(1+\frac{u'}{n_f}\right)^{n_f}\right\rfloor\\
&\le \left\lfloor d^{{n_f}^2+n_f}(\delta_f)^{n_f+1}e^{n_f}(2n_f+5)^{n_f}(\Delta^2_{\mathcal Q,f}(n_f+1)\epsilon^{-1}+\Delta_{\mathcal Q,f})^{n_f}\right\rfloor,
\end{align*}
and hence we may get the truncation level
$$ L=\left\lfloor^{n_f^2+n_f}(\delta_f)^{n_f+1}e^{n_f}(2n_f+5)^{n_f}(\Delta^2_{\mathcal Q,f}(n_f+1)\epsilon^{-1}+\Delta_{\mathcal Q,f})^{n_f}\right\rfloor.$$
Hence, the proof of the theorem is completed.
\end{proof}

\section{Proof of Main Theorems}
Motivated by the idea of using Nochka diagram for families of fixed hypersurfaces in subgeneral position of G. Heier and A. Levin in \cite{HL}, we now prove the following lemma.

\begin{lemma}\label{4.1}
Let $f$ be a nonconstant meromorphic mappings from $\C^m$ into a projective subvariety $V\subset\P^n(\C)$ of dimension $k$. Let $\mathcal{Q} = \{Q_1, \ldots, Q_q\}$ be a family of moving hypersurfaces in $\P^n(\C)$ which has distributive constant $\Delta_{\mathcal Q,V}$ with respect to $V\ (N> k)$ and satisfying the B\'ezout property with respect to $f$. Let $n_f$ be the algebraic dimension of $f$ over the field $\mathcal K_{\mathcal Q}$. Then there exists a subset $\Gamma \subset \{1,\ldots,q\}$ such that $\sharp\Gamma\ge q -\Delta_{\mathcal Q,V}\left(k-\left\lceil\frac{n_f}{2}\right\rceil\right)$ and the family $\mathcal P= \{Q_i : i \in \Gamma\}$ satisfies $\Delta_{\mathcal P,f}\le \frac{2k-n_f+1}{n_f+1}\Delta_{\mathcal Q,V}$.
\end{lemma}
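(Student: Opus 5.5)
The plan is a single ``remove one bad block'' argument in the spirit of the Nochka diagram of Heier--Levin, working at a generic point $z$. Since $f$ maps into $V$, we have $V_z\subset V$ for generic $z$, and $\dim V_z=n_f$. Put $\lambda:=\frac{2k-n_f+1}{n_f+1}\Delta_{\mathcal Q,V}$, and for $\varnothing\ne\Gamma'\subset\{1,\ldots,q\}$ write $c(\Gamma'):=\mathrm{codim}_{V_z}\bigl(\bigcap_{j\in\Gamma'}Q_j(z)^*\bigr)$, which is constant for generic $z$. Two facts will be used.

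\emph{Comparison.} Since $V_z\cap S\subset V\cap S$ and the definition of $\Delta_{\mathcal Q,V}$ gives $\sharp\Gamma'\le\Delta_{\mathcal Q,V}\,\mathrm{codim}_V(\cdot)$, we get $\sharp\Gamma'\le \Delta_{\mathcal Q,V}\bigl(c(\Gamma')+k-n_f\bigr)$ whenever $V_z\cap\bigcap_{\Gamma'}Q_j(z)^*\neq\varnothing$.

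\emph{Subadditivity.} The B\'ezout condition says exactly that $c(\Gamma_1\cup\Gamma_2)\le c(\Gamma_1)+c(\Gamma_2)$.

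Call $\Gamma'$ \emph{bad} if $\sharp\Gamma'>\lambda\,c(\Gamma')$. If no bad subset exists, take $\Gamma=\{1,\ldots,q\}$ and we are done. Otherwise choose a bad $\Gamma_0$ of maximal cardinality and set $\Gamma=\{1,\ldots,q\}\setminus\Gamma_0$. I claim $\mathcal P=\{Q_i\}_{i\in\Gamma}$ has no bad subset, which is the same as $\Delta_{\mathcal P,f}\le\lambda$. Suppose $\Gamma'\subset\Gamma$ were bad. It is disjoint from $\Gamma_0$, so subadditivity gives
\[
\sharp(\Gamma_0\cup\Gamma')=\sharp\Gamma_0+\sharp\Gamma'>\lambda\bigl(c(\Gamma_0)+c(\Gamma')\bigr)\ge\lambda\,c(\Gamma_0\cup\Gamma').
\]
So $\Gamma_0\cup\Gamma'$ is a strictly larger bad set, which contradicts maximality. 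Thus only one block ever needs to be removed.

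It remains to show $\sharp\Gamma_0\le\Delta_{\mathcal Q,V}\bigl(k-\lceil n_f/2\rceil\bigr)$, and this is the step I expect to be the real obstacle. First I must rule out the case $V_z\cap\bigcap_{\Gamma_0}Q_j(z)^*=\varnothing$. There $c(\Gamma_0)=n_f+1$, so badness would force $\sharp\Gamma_0>(2k-n_f+1)\Delta_{\mathcal Q,V}$. On the other hand $\mathrm{codim}_V\le k+1$ gives $\sharp\Gamma_0\le(k+1)\Delta_{\mathcal Q,V}$. These are incompatible when $n_f\le k$, since $2k-n_f+1\ge k+1$. So $V_z\cap\bigcap_{\Gamma_0}Q_j(z)^*\ne\varnothing$ and $c:=c(\Gamma_0)\le n_f$. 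Combining the comparison with badness then gives
\[
\lambda c<\sharp\Gamma_0\le\Delta_{\mathcal Q,V}(c+k-n_f).
\]
Two consequences follow. The first is $c\bigl(\tfrac{2k-n_f+1}{n_f+1}-1\bigr)<k-n_f$, i.e.
\[
c<\frac{(k-n_f)(n_f+1)}{2(k-n_f)}=\frac{n_f+1}{2}.
\]
Hence $c\le\lceil n_f/2\rceil-1$ when $n_f$ is even, and $c\le\lfloor n_f/2\rfloor$ in general; this needs $k>n_f$, and the case $k=n_f$ has no bad sets because then $\lambda=\Delta_{\mathcal Q,V}$ and $c=\mathrm{codim}_V$. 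The second is $\sharp\Gamma_0\le\Delta_{\mathcal Q,V}(c+k-n_f)\le\Delta_{\mathcal Q,V}\bigl(k-\lceil n_f/2\rceil\bigr)$, as required, using $c\le\lfloor n_f/2\rfloor=n_f-\lceil n_f/2\rceil$. The delicate points are this integer rounding and checking the boundary case $c=0$: then $\Gamma_0$ consists of hypersurfaces containing $V_z$ but not $V$, so it is counted by $\mathrm{codim}_V\le k-n_f$, which fits the same bound. If the rounding fails by one in some parity case, I would first try to sharpen the comparison using $\mathrm{codim}_V\ge1$ for nonempty $\Gamma'$ (each $Q_j$ does not contain $V$ generically). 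Failing that, I would choose $\Gamma_0$ among bad sets maximizing $\sharp\Gamma'-\lambda c(\Gamma')$ rather than cardinality.
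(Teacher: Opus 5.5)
Your proposal is correct and follows the paper's proof. Like the paper, you take a bad set $\Gamma_0$ of maximal cardinality, use the B\'ezout subadditivity to show that no bad set is disjoint from it, and bound $\sharp\Gamma_0\le\Delta_{\mathcal Q,V}\bigl(k-n_f+c(\Gamma_0)\bigr)$ with $c(\Gamma_0)<\frac{n_f+1}{2}$. One slip: for even $n_f$ this gives $c\le n_f/2$, not $c\le\lceil n_f/2\rceil-1$ (that form is the odd case), but you only use $c\le\lfloor n_f/2\rfloor=n_f-\lceil n_f/2\rceil$, so the rounding works in every parity and your fallback plans are unnecessary.
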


\begin{proof} 
Let $\{R_1,\ldots,R_\lambda\}$ a minimal subset of $\mathcal K_{\mathcal Q}[x_0,\ldots,x_n]$, which generates $I(f(\C^m))$. 
For $z$ is not a pole of any $R_i\ (1\le i\le \lambda)$, set $V_z=\bigcap_{i=1}^\lambda R_i(z)^*$. Then $\dim V_z=n_f$ for generic points $z$.

For any subset $\Gamma \subset \{1,\ldots,q\}$, denote
$$Q_\Gamma(z) = \bigcap_{i \in \Gamma}Q_i(z)^* \cap V_z, \quad c(\Gamma) = \operatorname{codim}_{V_z} Q_\Gamma(z)$$
for generic points $z$. Note that
$$\Delta_{\mathcal Q,f}= \max_{\Gamma \ne \emptyset} \frac{\sharp\Gamma}{c(\Gamma)}.$$
If $\Delta_{\mathcal Q,f}\le \frac{2k-n_f+1}{n_f+1}\Delta_{\mathcal Q,V}$, then the conclusion holds with $\Gamma = \{1,\ldots,q\}$.

Assume $\Delta_{\mathcal Q,f}>\frac{2k-n_f+1}{n_f+1}\Delta_{\mathcal Q,V}$. Let
$$\mathcal{S} = \left\{ \Gamma \subset \{1,\ldots,q\} : \frac{\sharp\Gamma}{c(\Gamma)} >\frac{2k-n_f+1}{n_f+1}\right\}.$$
Then $\mathcal{S}$ is nonempty. Choose $\Gamma_1 \in \mathcal{S}$ such that $\sharp\Gamma_1$ is maximal. Since 
\begin{align*}
\sharp\Gamma_1&\le \Delta_{\mathcal Q,V}\left(\dim V-\dim V\cap\bigl(\bigcap_{j\in\Gamma_1}Q_j(z)^*\bigl)\right)\\
&\le \Delta_{\mathcal Q,V}\left(\dim V-\dim V_z\cap\bigl(\bigcap_{j\in\Gamma_1}Q_j(z)^*\bigl)\right)\\
&=\Delta_{\mathcal Q,V}\left(k-n_f+c(\Gamma_1)\right),
\end{align*} 
we have
$$\frac{2k-n_f+1}{n_f+1}\Delta_{\mathcal Q,V}<\frac{\sharp\Gamma_1}{c(\Gamma_1)}\le \frac{\left(k-n_f+c(\Gamma_1)\right)}{c(\Gamma_1)}\Delta_{\mathcal Q,V}.$$
This implies that $c(\Gamma_1)<\frac{n_f+1}{2}$ and then $c(\Gamma_1)\le \left\lfloor\frac{n_f}{2}\right\rfloor$.

\medskip
\noindent \textbf{Claim.} For any $\Gamma_2 \in \mathcal{S}\setminus\{\Gamma_1\}$, we have $\Gamma_1 \cap \Gamma_2 \ne \emptyset$.
\medskip

\noindent \textit{Proof of Claim.}
Assume $\Gamma_1 \cap \Gamma_2 = \emptyset$. Similar as above, one has $c(\Gamma_2)<\frac{n_f+1}{2}$.  Using the B\'ezout property, we have
$$c(\Gamma_1 \cup \Gamma_2) \le c(\Gamma_1) + c(\Gamma_2)<n_f+1.$$
This yields that $\bigcap_{j\in \Gamma_1 \cup \Gamma_2}(Q_j(z)^*\cap V_z)\ne\varnothing$ for generic points $z$.

Also, from
$$\frac{\sharp\Gamma_1}{c(\Gamma_1)} > \frac{2k-n_f+1}{n_f+1}\Delta_{\mathcal Q,V} \quad\text{ and }\quad \frac{\sharp\Gamma_2}{c(\Gamma_2)} > \frac{2k-n_f+1}{n_f+1}\Delta_{\mathcal Q,V},$$
by using the B\'ezout property we obtain
$$\frac{\sharp\Gamma_1 + \sharp\Gamma_2}{c(\Gamma_1 \cup \Gamma_2)}\ge \frac{\sharp\Gamma_1+\sharp\Gamma_2}{c(\Gamma_1) + c(\Gamma_2)} > \frac{2k-n_f+1}{n_f+1}\Delta_{\mathcal Q,V}.$$
Thus $\Gamma_1 \cup \Gamma_2 \in \mathcal{S}$, contradicting the maximality of $\sharp\Gamma_1$. 
The claim is proved.

Now set
$$\Gamma = \{1,\ldots,q\} \setminus \Gamma_1, \quad \mathcal{P} = \{Q_i : i \in \Gamma\}.$$
Then
$$\sharp\Gamma\ge q -\Delta_{\mathcal Q,V}\left(k-n_f+c(\Gamma_1)\right)\ge q -\Delta_{\mathcal Q,V}\left(k-\left\lceil\frac{n_f}{2}\right\rceil\right).$$
Finally, suppose there exists $\Gamma' \subset \Gamma$ such that
$$
\frac{|\Gamma'|}{c(\Gamma')} > \frac{2k-n_f+1}{n_f+1}\Delta_{\mathcal Q,V}.
$$
Then $\Gamma' \in \mathcal{S}$ but $\Gamma' \cap \Gamma_1 = \emptyset$, contradicting the claim. 
Hence
$$\Delta_{\mathcal P,V}\le \frac{2k-n_f+1}{n_f+1}\Delta_{\mathcal Q,V}.$$
This completes the proof.
\end{proof}

If the family $\mathcal{Q} = \{Q_1, \ldots, Q_q\}$ is in $N$-subgeneral position then we will have more optimal result as follows. 
\begin{lemma}\label{4.2}
Let $f$ be a nonconstant meromorphic mappings from $\C^m$ into a projective subvariety $V\subset\P^n(\C)$ of dimension $k$. Let $\mathcal{Q} = \{Q_1, \ldots, Q_q\}$ be a family of moving hypersurfaces in $\P^n(\C)$ in $N$-subgeneral position with respect to $V\ (N> k)$ and satisfying the B\'ezout property with respect to $f$. Let $n_f$ be the algebraic dimension of $f$ over the field $\mathcal K_{\mathcal Q}$. Then there exists a subset $\Gamma \subset \{1,\ldots,q\}$ such that $\sharp\Gamma\ge q -N+\left\lceil\frac{n_f}{2}\right\rceil$ and the family $\mathcal P= \{Q_i : i \in \Gamma\}$ satisfies $\Delta_{\mathcal P,f}\le \frac{2N-n_f+1}{n_f+1}$.
\end{lemma}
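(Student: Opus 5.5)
The plan is to mirror the proof of Lemma \ref{4.1}, using $N$-subgeneral position in place of the distributive constant. Recall that $N$-subgeneral position with respect to $V$ means that for generic $z$ any $N+1$ of the $Q_j(z)^*$ have empty common intersection with $V$. The only new input is a sharper replacement for the bound $\sharp\Gamma_1\le\Delta_{\mathcal Q,V}(k-n_f+c(\Gamma_1))$. I keep the notation $V_z$, $Q_\Gamma(z)=\bigcap_{i\in\Gamma}Q_i(z)^*\cap V_z$ and $c(\Gamma)=\mathrm{codim}_{V_z}Q_\Gamma(z)$, with $\dim V_z=n_f$ generically.

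\textbf{Step 1 (key inequality).} The first task is to show that for every $\Gamma$ with $Q_\Gamma(z)\neq\varnothing$ generically,
$$\sharp\Gamma\le N-n_f+c(\Gamma).$$
Set $\dim Q_\Gamma(z)=n_f-c(\Gamma)=:a\ge 0$. Starting from $\Gamma$, I would add hypersurfaces $Q_j$ with $j\notin\Gamma$ one at a time. Each addition lowers the dimension of the intersection with $V_z$ by at most one, and the process must reach $\varnothing$. Indeed, the full family $\{Q_1,\ldots,Q_q\}$ has empty common intersection with $V$, hence with $V_z\subset V$; this follows from $N$-subgeneral position, assuming $q\ge N+1$ (otherwise the statement is trivial after adjusting). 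Reaching $\varnothing$ therefore takes at least $a+1$ additional hypersurfaces. Moreover, some set of at most $N+1$ indices in total already has empty intersection with $V$, and hence with $V_z$.

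To make this count precise, I would argue as follows. Choose the added indices greedily among the others so that each one strictly drops the dimension; this is possible because no $Q_j(z)^*$ contains an irreducible component of $V_z$ generically. This yields $a+1$ indices $j_0,\ldots,j_a\notin\Gamma$ such that $\Gamma\cup\{j_0,\ldots,j_a\}$ cuts $V_z$ emptily. The point is that $\sharp\Gamma+a+1$ must still be $\le N+1$, and this is the main obstacle. The naive pigeonhole ``any $N+1$ are empty'' gives $\sharp\Gamma\le N$, but not the refined bound. The standard remedy, as in Nochka-type arguments, is to note that for a set of indices whose common intersection with $V_z$ has dimension $a$, $N$-subgeneral position forces $\sharp\Gamma\le N-a$. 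Otherwise we could add $a+1$ further hypersurfaces to obtain more than $N+1$ indices with empty intersection, and after generic dimension counting some $N+1$-subset would still meet $V_z$. I would prove this by induction on $a$, using that intersecting with one more generic member cuts the dimension by exactly one. This gives $\sharp\Gamma\le N-n_f+c(\Gamma)$.

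\textbf{Step 2 (greedy removal).} Assume $\Delta_{\mathcal Q,f}>\frac{2N-n_f+1}{n_f+1}$; otherwise take $\Gamma$ to be everything. Let $\mathcal S$ be the set of $\Gamma$ with $\sharp\Gamma/c(\Gamma)>\frac{2N-n_f+1}{n_f+1}$, and pick $\Gamma_1\in\mathcal S$ of maximal cardinality. Combining with Step 1 (the case $Q_{\Gamma_1}=\varnothing$ gives $c=n_f+1$ and ratio at most $(N+1)/(n_f+1)$, which is not larger than the threshold since $N\ge n_f$), we get
$$\frac{2N-n_f+1}{n_f+1}<\frac{N-n_f+c(\Gamma_1)}{c(\Gamma_1)},$$
which yields $c(\Gamma_1)<\frac{n_f+1}{2}$, so $c(\Gamma_1)\le\lfloor n_f/2\rfloor$. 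Then
$$\sharp\Gamma_1\le N-n_f+\lfloor n_f/2\rfloor=N-\lceil n_f/2\rceil.$$

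\textbf{Step 3 (conclusion).} The Claim that any two members of $\mathcal S$ intersect is proved exactly as in Lemma \ref{4.1}, using the Bézout property and the mediant inequality. Setting $\Gamma=\{1,\ldots,q\}\setminus\Gamma_1$ gives $\sharp\Gamma\ge q-N+\lceil n_f/2\rceil$. Any $\Gamma'\subset\Gamma$ lying in $\mathcal S$ would be disjoint from $\Gamma_1$, contradicting the Claim. Hence $\Delta_{\mathcal P,f}\le\frac{2N-n_f+1}{n_f+1}$.
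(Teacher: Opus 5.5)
Your Steps 2 and 3 follow the paper's proof almost exactly. Your count $\sharp\Gamma_1\le N-\lceil n_f/2\rceil$ is correct; the paper's line $\sharp\Gamma\ge q-N+n_f+c(\Gamma_1)$ has a sign slip. Step 1 tries to justify the inequality $\sharp\Gamma\le N-n_f+c(\Gamma)$, which the paper simply asserts. Two points need correcting.

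First, the parenthetical in Step 2 is false. A subset $\Gamma_1$ with $Q_{\Gamma_1}(z)=\varnothing$ can have any cardinality up to $q$, so nothing bounds its ratio by $(N+1)/(n_f+1)$. Suppose $\mathrm{codim}(\varnothing)=n_f+1$ is used literally inside $\Delta_{\mathcal Q,f}$. Then $\{1,\ldots,q\}\in\mathcal S$ as soon as $q>2N-n_f+1$, your maximal $\Gamma_1$ is the whole index set, and $\Gamma=\varnothing$. In that reading the lemma fails for large $q$, and general position would not give $\Delta_{\mathcal Q,f}=1$ as the introduction asserts. The lemma needs the convention that subsets with empty intersection do not enter the maximum defining $\Delta$.

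With that convention, every member of $\mathcal S$ satisfies $Q_\Gamma(z)\neq\varnothing$; in particular this holds for $\Gamma_1$ and for the $\Gamma_2$ of the Claim, so Step 1 applies to both. The B\'ezout estimate $c(\Gamma_1\cup\Gamma_2)\le c(\Gamma_1)+c(\Gamma_2)<n_f+1$ is then exactly what shows $Q_{\Gamma_1\cup\Gamma_2}(z)\neq\varnothing$, hence $\Gamma_1\cup\Gamma_2\in\mathcal S$. The paper uses this tacitly when it writes $\sharp\Gamma_1\le N-n_f+c(\Gamma_1)$. You should state the convention rather than dismiss the empty case with an incorrect bound.

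Second, Step 1 is more convoluted than necessary and partly wrong as written. The $Q_j$ are not generic, so intersecting with one more $Q_j(z)^*$ need not lower the dimension by exactly one. Also, ``no $Q_j(z)^*$ contains a component of $V_z$'' does not justify your greedy choice, which concerns components of the smaller set $Q_\Gamma(z)$. Your first paragraph already has the right ingredient, namely ``at most one''; the clean argument is as follows.

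Let $a=\dim Q_\Gamma(z)\ge 0$. If $\sharp\Gamma\ge N+1$, then $N$-subgeneral position and $V_z\subset V$ force $Q_\Gamma(z)=\varnothing$, a contradiction. If $N+1-a\le\sharp\Gamma\le N$, adjoin any $s=N+1-\sharp\Gamma\le a$ further indices. A hypersurface section lowers dimension by at most one and meets every positive-dimensional projective variety. So these $N+1$ hypersurfaces meet $V_z\subset V$ in dimension $\ge a-s\ge 0$, contradicting $N$-subgeneral position. Hence $\sharp\Gamma\le N-a=N-n_f+c(\Gamma)$, with no greedy choice or induction needed.
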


\begin{proof} 
We use the same notation and the arguments as in the proof of Lemma \ref{4.1}. If $\Delta_{\mathcal Q,f}\le \frac{2N-n_f+1}{n_f+1}$ then the conclusion holds with $\Gamma = \{1,\ldots,q\}$.

Assume $\Delta_{\mathcal Q,f}>\frac{2N-n_f+1}{n_f+1}$. Let
$$\mathcal{S} = \left\{ \Gamma \subset \{1,\ldots,q\} : \frac{\sharp\Gamma}{c(\Gamma)} >\frac{2N-n_f+1}{n_f+1}\right\}.$$
Then $\mathcal{S}$ is nonempty. Choose $\Gamma_1 \in \mathcal{S}$ such that $\sharp\Gamma_1$ is maximal. Since 
$\sharp\Gamma_1\le N-n_f+c(\Gamma_1),$
we have
$$\frac{2N-n_f+1}{n_f+1}<\frac{\sharp\Gamma_1}{c(\Gamma_1)}\le \frac{N-n_f+c(\Gamma_1)}{c(\Gamma_1)}.$$
This implies that $c(\Gamma_1)<\frac{n_f+1}{2}$ and then $c(\Gamma_1)\le \left\lfloor\frac{n_f}{2}\right\rfloor$.

Similarly as the proof of Lemma \ref{4.1}, we have that $\Gamma_1 \cap \Gamma_2 \ne \emptyset$ for any $\Gamma_2 \in \mathcal{S}\setminus\{\Gamma_1\}$.

By setting
$$\Gamma = \{1,\ldots,q\} \setminus \Gamma_1, \quad \mathcal{P} = \{Q_i : i \in \Gamma\},$$
we have
$$\sharp\Gamma\ge q -N+n_f+c(\Gamma_1)\ge q -N+\left\lceil\frac{n_f}{2}\right\rceil,$$
and $\Gamma'\not\in\mathcal S$ for every $\Gamma' \subset \Gamma$ (since $\Gamma' \cap \Gamma_1 = \emptyset$). Hence
$$\Delta_{\mathcal P,V}\le \frac{2N-n_f+1}{n_f+1}.$$
This completes the proof.
\end{proof}

\begin{proof}[Proof of Theorem \ref{1.1}]
By Lemma \ref{4.1},  there is a subset $\Gamma$ of $\{1,\ldots,q\}$ with $\sharp\Gamma\ge q -\Delta_{\mathcal Q,V}\left(k-\left\lceil\frac{n_f}{2}\right\rceil\right)$ such that the family of hypersurfaces $\mathcal P=\{Q_i;i\in\Gamma\}$ satisfies $\Delta_{\mathcal P,V}\le \frac{2k-n_f+1}{n_f+1}\Delta_{\mathcal Q,V}$. Applying Theorem \ref{main} for $f$ and the family $\mathcal P$, we obtain
$$\biggl \|\ (q -\Delta_{\mathcal Q,V}\left(2k-n_f+1+k-\left\lceil\frac{n_f}{2}\right\rceil\right)-\epsilon)T_f(r)\le\sum_{i\in\Gamma_1}\frac{1}{\deg Q_i}N^{[L_1'-1]}(r,\nu_{(Q_i,f)})+o(T_f(r)),$$
where 
$$L_1'=d^{n_f}\delta_f(u'+1)^{n_f}\left\lfloor\bigl(1+\frac{\epsilon}{2(n_f+1)\Delta_{\mathcal P,f}}\bigl)^{\left\lfloor\frac{d^{n_f}\delta_f(u'+1)^{n_f+q}}{\log^2(1+\frac{\epsilon}{2(n_f+1)\Delta_{\mathcal P,f}})}\right\rfloor+1}\right\rfloor$$
with
$$u'=\lceil 2\Delta_{\mathcal P,f}(2n_f+1)(n_f+1)d^{n_f}\delta_f(\Delta_{\mathcal P,f}(n_f+1)+\epsilon)\epsilon^{-1}\rceil.$$
We have the following estimate:
\begin{itemize}
\item $1\le n_f\le k$;
\item $\Delta_{\mathcal P,f}(n_f+1)\le (2k-n_f+1)\Delta_{\mathcal Q,V}\le 2k\Delta_{\mathcal Q,V}$;
\item $\log (1+x)>\frac{2x}{x+2}\ \forall x>0$, and hence $(1+x)^{\frac{1}{\log^2(1+x)}}=e^{\frac{1}{\log (1+x)}}\le e^{\frac{1}{x}+\frac{1}{2}}\ \forall x>0$.
\end{itemize}
Therefore, we have
\begin{itemize}
\item $u'\le \lceil 2k\Delta_{\mathcal Q,V}(2k+1)d^{k}\delta_f(2k\Delta_{\mathcal Q,V}+\epsilon)\epsilon^{-1}\rceil=u;$
\item $L'\le d^{k}\delta_f(u+1)^{k}\left\lfloor\bigl(1+\frac{\epsilon}{4}\bigl)e^{\left(d^k\delta_f(u+1)^{k+q}\right)\left(\frac{1}{2}+\frac{4}{\epsilon}\right)}\right\rfloor=L.$
\end{itemize}
Then, the above inequalities imply that
$$\biggl \|\ \left (q -\Delta_{\mathcal Q,V}(3k-1)-\epsilon\right)T_f(r)\le\sum_{i=1}^q\frac{1}{\deg Q_i}N^{[L-1]}(r,\nu_{(Q_i,f)})+o(T_f(r)).$$

Moreover, if all $Q_i\ (1\le i\le q)$ are fixed hypersurfaces then we may take 
\begin{align*}L'&=\left\lfloor d^{n_f^2+n_f}(\delta_f)^{n_f+1}e^{n_f}(2n_f+5)^{n_f}(\Delta^2_{\mathcal P,f}(n_f+1)\epsilon^{-1}+\Delta_{\mathcal P,f})^{n_f}\right\rfloor\\
&\le \left\lfloor d^{k^2+k}(\delta_f)^{k+1}e^{k}\frac{(2k+5)^{k}(2k)^k}{(k+1)^k}\Delta_{\mathcal Q,V}^k(2k\Delta_{\mathcal Q,V}\epsilon^{-1}+1)^k\right\rfloor\\
&\le \left\lfloor d^{k^2+k}(\delta_f)^{k+1}e^{k+\frac{3}{2}}(4k)^k\Delta_{\mathcal Q,V}^k(2k\Delta_{\mathcal Q,V}\epsilon^{-1}+1)^k\right\rfloor.
\end{align*}
Here, the last inequality comes from that 
$$\frac{(2k+5)^{k}(2k)^k}{(k+1)^k}=\left(4k+\frac{6k}{k+1}\right)^k<(4k+6)^k\le (4k)^ke^{3/2}.$$ 
Then we may take
$$ L=\left\lfloor d^{k^2+k}(\delta_f)^{k+1}e^{k+\frac{3}{2}}(4k)^k\Delta_{\mathcal Q,V}^k(2k\Delta_{\mathcal Q,V}\epsilon^{-1}+1)^k\right\rfloor.$$

Hence, the theorem is proved.
\end{proof}

\begin{proof}[Proof of Theorem \ref{1.2}]
We follow the same argument as in the proof of Theorem \ref{1.1}. The only difference is that, instead of applying Lemma \ref{4.1}, we apply Lemma \ref{4.2}, noting that
$$(n_f+1)\Delta_{\mathcal P,f}\le 2N.$$
Since the proofs of the two theorems are essentially identical, we omit the detailed proof of Theorem \ref{1.2}.
\end{proof}
\vskip0.2cm
\noindent
\section*{Data availability}
Data sharing not applicable to this article as no datasets were generated or analyzed during the current study.

\section*{Disclosure statement} 
The authors states that there is no conflict of interest.

\vskip0.2cm
{\footnotesize 
\noindent
$^1$ Department of Mathematics, School of Mathematics and Computational Science, Hanoi National University of Education,\\
 136-Xuan Thuy, Cau Giay, Hanoi, Vietnam.\\
$^2$ Institute of Natural Sciences, Hanoi National University of Education,\\
 136-Xuan Thuy, Cau Giay, Hanoi, Vietnam.\\
\textit{E-mail}: quangsd@hnue.edu.vn, linhchihnue2401@gmail.com}

\end{document}